
\documentclass[preprint,12pt]{elsarticle}
\usepackage{amsthm,amsfonts,amssymb,amscd,amsmath,enumerate,verbatim,calc,graphicx,geometry}
\usepackage[all]{xy}
\newtheorem{theorem}{Theorem}[section]
\newtheorem{lemma}[theorem]{Lemma}

\newtheorem{corollary}[theorem]{Corollary}
\theoremstyle{definition}
\theoremstyle{definitions}

\newtheorem{remark}[theorem]{Remark}
\newtheorem{example}[theorem]{Example}
\theoremstyle{notations}

\theoremstyle{remarks}

\journal{}

\begin{document}

\begin{frontmatter}



\title{On Topological Shape Homotopy Groups}


\author[]{Tayyebe~Nasri}
\ead{tnasri72@yahoo.com}
\author[]{Fateme~Ghanei}
\ead{fatemeh.ghanei91@gmail.com}
\author[]{Behrooz~Mashayekhy\corref{cor1}}
\ead{bmashf@um.ac.ir}
\author[]{Hanieh~Mirebrahimi}
\ead{h\_mirebrahimi@um.ac.ir}

\address{Department of Pure Mathematics, Center of Excellence in Analysis on Algebraic Structures, Ferdowsi University of Mashhad,\\
P.O.Box 1159-91775, Mashhad, Iran.}
\cortext[cor1]{Corresponding author}

\begin{abstract}
In this paper, using the topology on the set of shape morphisms between arbitrary topological spaces $X$,
$Y$, $Sh(X,Y)$, defined by Cuchillo-Ibanez et al. in 1999, we consider a topology on the shape homotopy groups of arbitrary topological spaces which make them Hausdorff topological groups. We then exhibit an example in which $\check{\pi}_k^{top}$ succeeds in distinguishing the shape type of $X$ and $Y$ while $\check{\pi}_k$ fails, for all $k\in \Bbb{N}$. Moreover, we present some basic properties of topological shape homotopy groups, among them commutativity of $\check{\pi}_k^{top}$ with finite product of compact Hausdorff spaces. Finally, we consider a quotient topology on the $k$th shape group induced by the $k$th shape loop space and show that it coincides with the above topology.
\end{abstract}

\begin{keyword}
Topological shape homotopy group\sep Shape group\sep Topological group\sep Inverse limit.
\MSC[2010]  55Q07\sep 55P55\sep 54C56\sep 54H11\sep 18A30

\end{keyword}

\end{frontmatter}

\section{Introduction and Motivation}

Moron et al. \cite{Mo1} gave a complete, non-Archimedean metric (or ultrametric) on the set of shape morphisms between
two unpointed compacta (compact metric spaces) $X,$ $Y$ , written $Sh(X, Y)$. Then they mentioned that this construction can be translated to the pointed case. Consequently, as a particular case, they obtained a complete ultrametric that induces a norm on the shape groups of a compactum $Y$ and then presented some results on these topological groups \cite{Mo2}. Also Cuchillo-Ibanez et al. \cite{CM2} constructed many generalized ultrametrics in the set of shape morphisms between topological spaces and obtained semivaluations and valuations on the groups of shape equivalences and $k$th shape groups. On the other hands, Cuchillo-Ibanez et al. \cite{CM}  introduced a topology on the set $Sh(X, Y)$, where $X$ and $Y$ are arbitrary topological spaces, in such a way that it extended topologically the construction given in \cite{Mo1}. Also, Moszynska \cite{M} showed that the $k$th shape group $\check{\pi}_k(X,x)$, $k\in \Bbb{N}$, is isomorphic to the set $Sh((S^k, *),(X x))$ consists of all shape morphisms $(S^k, *)\rightarrow(X, x)$ with a group operation. In this paper we consider the latter topology on the set of shape morphisms between pointed spaces, consequently we obtain a topology on the shape groups of arbitrary spaces. We show that the $k$th shape group $\check{\pi}_k(X,x)$, $k\in \Bbb{N}$, with the above topology is a Hausdorff topological group, denoted by $\check{\pi}_k^{top}(X,x)$. We then exhibit an example in which $\check{\pi}_k^{top}$ succeeds in distinguishing the shape type of $X$ and $Y$ while $\check{\pi}_k$ fails, for all $k\in \Bbb{N}$. In fact, we exhibit two topological spaces $X$ and $Y$ in which whose $k$th shape groups are isomorphic while whose topological $k$th shape groups are not isomorphic, for all $k\in\Bbb{N}$. Also we show that $\check{\pi}_k^{top}$ preserves the product $X\times Y$ provided $X$ and $Y$ are compact Hausdorff spaces.  Moreover, we obtain some topological properties of these groups and we study some properties in shape as movability. We show that movability can be transferred from a pointed topological spaces $(X,x)$ to $\check{\pi}_k^{top}(X,x)$ under some conditions. Also we show that if $X\in HPol$, then $\check{\pi}_k^{top}(X,x)$ is discrete. Moreover, if $\mathbf{p}:X\longrightarrow\mathbf{X}=(X_{\lambda}, p_{\lambda\lambda'},\Lambda) $ is an HPol-expansion of $X$ and $\check{\pi}_k^{top}(X,x)$ is discrete, then $\check{\pi}_k^{top}(X,x)\leq\check{\pi}_k^{top}(X_{\lambda},x_{\lambda})$, for some $\lambda\in \Lambda$ and for all $k\in \Bbb{N}$.

Endowed with the quotient topology induced by the natural surjective map $q:\Omega^k(X,x)\rightarrow \pi_k(X,x)$, where $\Omega^k(X,x)$ is the $k$th loop space of $(X,x)$ with the compact-open topology, the familiar homotopy group $\pi_k(X,x)$ becomes a quasitopological group which is called the quasitopological $k$th homotopy group of the pointed space $(X,x)$, denoted by $\pi_k^{qtop}(X,x)$ (see \cite{B0,Br,Bra,G1}).

Biss \cite{B0} proved that $\pi_1^{qtop}(X,x)$ is a topological group. However, Calcut and McCarthy \cite{CM1} and Fabel \cite{F2} showed that there is a gap in the proof of \cite[Proposition 3.1]{B0}. The misstep in the proof is repeated by Ghane et al. \cite{G1} to prove that $\pi_k^{qtop}(X,x)$ is a topological group \cite[Theorem 2.1]{G1} (see also \cite{CM1}).

Calcut and McCarthy \cite{CM1} showed that $\pi_1^{qtop}(X,x)$ is a homogeneous space and more precisely, Brazas \cite{Br} mentioned that $\pi_k^{qtop}(X,x)$ is a quasitopological group in the sense of \cite{A}.

Calcut and McCarthy \cite{CM1} proved that for a path connected and locally path connected space $X$,  $\pi_1^{qtop}(X)$ is a discrete topological group if and only if $X$ is semilocally 1-connected (see also \cite{Br}). Pakdaman et al. \cite{P1} showed that for a locally $(k-1)$-connected space $X$, $\pi_k^{qtop}(X,x)$ is discrete if and only if $X$ is semilocally n-connected at $x$ (see also \cite{G1}).
Fabel \cite{F2,F3} and Brazas \cite{Br} presented some spaces for which their quasitopological homotopy groups are not topological groups.
 Moreover, despite of Fabel's result \cite{F2} that says the quasitopological fundamental group of the Hawaiian earring is not a topological group, Ghane et al. \cite{G2} proved that the topological $k$th homotopy group of an $k$-Hawaiian like space is a prodiscrete metrizable topological group, for all $k\geq 2$.


For an HPol-expansion $\mathbf{p}:X\rightarrow (X_{\lambda},p_{\lambda\lambda'},\Lambda)$ of $X$, Brazas \cite{Br} introduced a topology on $\check{\pi}_k(X)$. Here we denote it by $\check{\pi}_k^{inv}(X)$. He mentioned that since $\pi_k^{qtop}(X_{\lambda})$ is discrete, for all $k\in \Bbb{N}$, one can define the $k$th topological shape group of $X$ as the limit $\check{\pi}^{inv}_k(X)=\displaystyle{\lim_{\leftarrow}\pi_k^{qtop}(X_{\lambda})}$ which is an inverse limit of discrete groups and so it is a Hausdorff topological group  \cite[Remark 2.19]{Br}.  In this paper, we prove that $\check{\pi}_k^{inv}(X,x)\cong\check{\pi}_k^{top}(X,x)$ as topological groups for every pointed topological space $(X,x)$. The second view at this topology implies some results which seem are not obtained with the first view.

Finally, for an HPol$_*$-expansion $\mathbf{p}:(X,x)\rightarrow ((X_{\lambda},x_{\lambda}),p_{\lambda\lambda'},\Lambda)$ of a pointed topological space $(X,x)$, we define  the $k$th shape loop space as a subspace of $\prod_{\lambda\in \Lambda} \Omega^{k}(X_{\lambda},x_{\lambda})$  and consider a quotient topology on the $k$th shape group induced by the $k$th shape loop space. Then we show that this quotient topology on the $k$th shape group coincides with the topology $\check{\pi}_k^{top}$. 
\section{Preliminaries}
In this section, we recall some of the main notions concerning the shape category and the pro-HTop (see \cite{MS}).
Let $\mathbf{X}=(X_{\lambda},p_{\lambda\lambda'},\Lambda)$ and $\mathbf{Y}=(Y_{\mu},q_{\mu\mu'},M)$ be two inverse systems in HTop. A {\it pro-morphism} of inverse
systems, $(f,f_{\mu}): \mathbf{X} \rightarrow \mathbf{Y} $, consists of an index function $f : M \rightarrow\Lambda$ and of mappings $f_{\mu}: X_{f(\mu)} \rightarrow Y_{\mu} $, $\mu\in M$, such that for every related pair $\mu\leq \mu'$ in $M$, there exists a $\lambda\in\Lambda$, $\lambda\geq f(\mu),f(\mu')$ so that,  $$q_{\mu\mu'}f_{\mu'}p_{f(\mu')\lambda}\simeq f_{\mu}p_{f(\mu)\lambda}.$$

The {\it composition} of two pro-morphisms $(f,f_{\mu}): \mathbf{X} \rightarrow \mathbf{Y} $ and $(g,g_{\nu}): \mathbf{Y} \rightarrow \mathbf{Z}=(Z_{\nu},r_{\nu\nu'},N) $ is also a pro-morphism $ (h,h_{\nu})=(g,g_{\nu})(f,f_{\mu}): \mathbf{X} \rightarrow \mathbf{Z} $, where $h=fg$ and $h_{\nu}=g_{\nu}f_{g(\nu)}$. The {\it identity pro-morphism} on $\mathbf{X}$ is pro-morphism $(1_{\Lambda}, 1_{X_{\lambda}}): \mathbf{X} \rightarrow \mathbf{X} $, where $1_{\Lambda}$ is the identity function.
A pro-morphism $(f,f_{\mu}): \mathbf{X} \rightarrow \mathbf{Y} $ is said to be {\it equivalent} to a pro-morphism $(f',f'_{\mu}): \mathbf{X} \rightarrow \mathbf{Y} $, denoted by $(f,f_{\mu})\sim (f',f'_{\mu})$, provided every $\mu\in M$ admits a $\lambda\in\Lambda$ such that $\lambda\geq f(\mu),f'(\mu)$ and
$$f_{\mu}p_{f(\mu)\lambda}\simeq f'_{\mu}p_{f'(\mu)\lambda}.$$

The relation $\sim$ is an equivalence relation. The {\it category} pro-HTop has as objects all inverse systems $\mathbf{X}$ in HTop and as morphisms all equivalence classes $\mathbf{f}=[(f,f_{\mu})]$. The composition of $\mathbf{f}=[(f,f_{\mu})]$ and $\mathbf{g}=[(g,g_{\nu})]$ in pro-HTop is well defined by putting
$$\mathbf{gf}=\mathbf{h}=[(h,h_{\nu})].$$

An HPol-expansion of a topological space $X$ is a morphism $\mathbf{p} :X\rightarrow \mathbf{X}$ of pro-HTop, where $\mathbf{X}$ belong to pro-HPol characterised by the following two properties:\\
(E1) For every $P\in HPol$ and every map $h:X\rightarrow P$ in HTop, there is a $\lambda\in \Lambda$ and a map $f:X_{\lambda}\rightarrow P$ in HPol such that $fp_{\lambda}\simeq h$.\\
(E2) If $f_0, f_1:X_{\lambda}\rightarrow P$ is satisfy $f_0p_{\lambda}\simeq f_1p_{\lambda}$, then there exists a $\lambda'\geq\lambda$ such that $f_0p_{\lambda\lambda'}\simeq f_1p_{\lambda\lambda'}$.

Let $\mathbf{p} :X\rightarrow \mathbf{X}$ and $\mathbf{p'} :X\rightarrow \mathbf{X'}$ be two HPol-expansions of the same space $X$ in HTop, and let $\mathbf{q} : Y \rightarrow \mathbf{Y}$ and $\mathbf{q'} : Y \rightarrow \mathbf{Y'}$ be two HPol-expansions of the same space $Y$ in HTop. Then there exist two natural isomorphisms $\mathbf{i}:\mathbf{X}\rightarrow \mathbf{X}'$ and $\mathbf{j}:\mathbf{Y}\rightarrow \mathbf{Y}'$ in pro-HTop. A morphism $\mathbf{f}:\mathbf{X}\rightarrow \mathbf{Y}$ is said to be {\it equivalent} to a morphism $\mathbf{f'}:\mathbf{X'}\rightarrow \mathbf{Y'}$, denoted by $\mathbf{f}\sim\mathbf{f'}$, provided the following diagram in pro-HTop commutes:
\begin{equation}
\label{dia}\begin{CD}
\mathbf{X}@>\mathbf{i}>>\mathbf{X'}\\
@VV \mathbf{f}V@V \mathbf{f'}VV\\
\mathbf{Y}@>\mathbf{j}>>\mathbf{Y'},
\end{CD}\end{equation}

Now, the {\it shape category} Sh is defined as follows: The objects of Sh are topological spaces. A morphism $F:X\rightarrow Y$ is the equivalence class $<\mathbf{f}>$ of a mapping $\mathbf{f}:\mathbf{X}\rightarrow \mathbf{Y}$ in pro-HTop.

The {\it composition} of $F=<\mathbf{f}>:X\rightarrow Y$ and $G=<\mathbf{g}>:Y\rightarrow Z$ is defined by the representatives, i.e., $GF=<\mathbf{g}\mathbf{f}>:X\rightarrow Z$. The {\it identity shape morphism} on a space $X$, $1_X:X\rightarrow X$, is the equivalence class $<1_\mathbf{X}>$ of the identity morphism $1_\mathbf{X}$ in pro-HTop.

Let $\mathbf{p}:X\rightarrow \mathbf{X}$ and $\mathbf{q}:Y\rightarrow \mathbf{Y}$ be HPol-expansions of $X$ and $Y$, respectively. Then for every morphism $f:X\rightarrow Y$ in HTop, there is a unique morphism $\mathbf{f}:\mathbf{X}\rightarrow \mathbf{Y}$ in pro-HTop such that the following diagram commutes in pro-HTop.
\begin{equation}
\label{dia}\begin{CD}
\mathbf{X}@<<\mathbf{p}<X\\
@VV \mathbf{f}V@V fVV\\
\mathbf{Y}@<<\mathbf{q}<Y.
\end{CD}\end{equation}

If we take other HPol-expansions $\mathbf{p'}:X\rightarrow \mathbf{X'}$ and $\mathbf{q'}:Y\rightarrow \mathbf{Y'}$, we obtain another morphism $\mathbf{f'}:\mathbf{X'}\rightarrow \mathbf{Y'}$ in pro-HTop such that $\mathbf{f'}\mathbf{p'^*}=\mathbf{q'}f$ and so we have $\mathbf{f}\sim\mathbf{f'}$. Hence every morphism $f\in HTop(X,Y)$ yields an equivalence class $<[\mathbf{f}]>$, i.e., a shape morphism $F:X\rightarrow Y$ which is denoted by $\mathcal{S}(f)$. If we put $\mathcal{S}(X)=X$ for every topological space $X$, then we obtain a functor $\mathcal{S}:HTop\rightarrow Sh$, called the {\it shape functor}.

Similarly, we can define the categories pro-HTop$_*$ and Sh$_*$ on pointed topological spaces (see \cite{MS}).
\section{The topological shape homotopy groups}\label{S}
In \cite{CM} Cuchillo-Ibanez et al. introduced a topology on the set of shape morphisms between two arbitrary topological spaces $X$ and $Y$, $Sh(Y,X)$, as follows:

Assume $\mathbf{X}=(X_{\lambda},p_{\lambda\lambda'},\Lambda)$ is an inverse system
in pro-HPol and let $\mathbf{p} : X\rightarrow\mathbf{X}$ be an HPol-expansion of $X$. For every $\lambda\in \Lambda$ and
$F\in Sh(Y,X)$ take $V^F_{\lambda}=\{G\in Sh(Y,X)|\ \  p_{\lambda}\circ F= p_{\lambda}\circ G \ \ \text{as homotopy classes
to}\ \ X_{\lambda}\}$. It is proved that, $\{V^F_{\lambda}|\ \  F\in Sh(Y,X) \ \ \text{and}\ \ \ \lambda\in \Lambda\}$ is a basis for a topology $T_\mathbf{p}$ on $Sh(Y,X)$. Moreover, this topology depends only on $X$ and $Y$. One can see that if $(\mathbf{X},\mathbf{x})=((X_{\lambda},x_{\lambda}),p_{\lambda\lambda'},\Lambda)$ is an inverse system in pro-HPol$_*$ and $\mathbf{p}:(X,x)\rightarrow (\mathbf{X},\mathbf{x})$ is an HPol$_*$-expansion of $(X,x)$. For every $\lambda\in \Lambda$ and $F\in Sh((S^k,*),(X,x))$ take \\$V^F_{\lambda}=\{G\in Sh((S^k,*),(X,x))|\ \  p_{\lambda}\circ F= p_{\lambda}\circ G \ \ \text{as pointed homotopy classes to}\ \ X_{\lambda}\}$. Then the collection $\{V^F_{\lambda}| F\in Sh((S^k,*),(X,x))\ \ \text{and}\ \ \lambda\in \Lambda\}$ is a basis for a topology $T_\mathbf{p}$ on $Sh((S^k,*),(X,x))$.

On the other hand, we can consider the $k$th shape group $\check{\pi}_k(X,x)$, $k\in \Bbb{N}$,  as the set of all shape morphisms $F:(S^k,*)\rightarrow (X,x)$, $ Sh ((S^k,*),(X,x))$ with the following multiplication which makes it a group.
$$ F+G= <\mathbf{f}>+<\mathbf{g}>=<\mathbf{f}+\mathbf{g}>=<[(f_{\lambda})]+[(g_{\lambda})]>=<[(f_{\lambda}+g_{\lambda})]>,$$
where shape morphisms $F$ and $G$ are represented by morphisms $\mathbf{f}=[(f_{\lambda})]$ and $\mathbf{g}=[(g_{\lambda})]:(S^k,*)\rightarrow (\mathbf{X},\mathbf{x})$ in pro-HPol$_*$, respectively (see \cite{M}).

Now, we intend to show that $\check{\pi}_k(X,x)=Sh((S^k,*),(X,x))$, $k\in \Bbb{N}$, with the above topology is a topological group which we then denote it by $\check{\pi}_k^{top}(X,x)$. Note that Moron et al. \cite{Mo1} introduced a norm on the shape groups of a compactum $X$ which the above topology on shape groups of metric compace spaces is the
same as  the topology induced by this norm (see \cite[Proposition 2]{CM}).

\begin{theorem}\label{quasi}
Let $(X,x)$ be a pointed topological space. Then $\check{\pi}_k^{top}(X,x)$ is a topological group for all $k\in \Bbb{N}$.
\end{theorem}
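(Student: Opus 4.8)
The plan is to verify directly that both the group operation $+\colon \check\pi_k^{top}(X,x)\times\check\pi_k^{top}(X,x)\to\check\pi_k^{top}(X,x)$ and the inversion map are continuous, working with the explicit basis $\{V^F_\lambda\}$ of the topology $T_{\mathbf p}$. Fix an HPol$_*$-expansion $\mathbf p\colon (X,x)\to (\mathbf X,\mathbf x)$ with $\mathbf X=((X_\lambda,x_\lambda),p_{\lambda\lambda'},\Lambda)$; since the topology depends only on $(X,x)$, this choice is harmless.

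First I would treat inversion. Given $F=<\mathbf f>$ with $\mathbf f=[(f_\lambda)]$, its inverse is $-F=<[(-f_\lambda)]>$, where $-f_\lambda$ denotes the composite of $f_\lambda\colon (S^k,*)\to (X_\lambda,x_\lambda)$ with a fixed degree $-1$ self-map of $(S^k,*)$ (equivalently the standard homotopy inverse in $\pi_k(X_\lambda,x_\lambda)$). The key observation is that $p_\lambda\circ(-F) = -(p_\lambda\circ F)$ as pointed homotopy classes into $X_\lambda$, so if $G\in V^{-F}_\lambda$ then $p_\lambda\circ(-G)=-(p_\lambda\circ G)=-(p_\lambda\circ(-F))=p_\lambda\circ F$, i.e. $-G\in V^F_\lambda$. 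Hence the preimage of $V^F_\lambda$ under inversion is exactly $V^{-F}_\lambda$, which is open; so inversion is continuous (in fact a homeomorphism).

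Next, continuity of $+$. Let $F,G\in\check\pi_k(X,x)$ and let $V^{F+G}_\lambda$ be a basic open neighborhood of $F+G$. I claim $(F+G')\in V^{F+G}_\lambda$ whenever $F'\in V^F_\lambda$ and $G'\in V^G_\lambda$; i.e. $V^F_\lambda\times V^G_\lambda$ maps into $V^{F+G}_\lambda$. This reduces to the identity $p_\lambda\circ(F'+G') = (p_\lambda\circ F')+(p_\lambda\circ G')$ in $\pi_k(X_\lambda,x_\lambda)$, which follows because the group operation on $\check\pi_k$ is defined levelwise ($F+G=<[(f_\lambda+g_\lambda)]>$) and post-composition with $p_\lambda$ is compatible with it; then $p_\lambda\circ F'=p_\lambda\circ F$ and $p_\lambda\circ G'=p_\lambda\circ G$ give $p_\lambda\circ(F'+G')=(p_\lambda\circ F)+(p_\lambda\circ G)=p_\lambda\circ(F+G)$, so $F'+G'\in V^{F+G}_\lambda$. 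Thus $+$ is continuous at $(F,G)$, and since $(F,G)$ was arbitrary, $+$ is continuous.

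The only subtlety — and the step I expect to require the most care — is the bookkeeping behind the "levelwise" identities $p_\lambda\circ(-F)=-(p_\lambda\circ F)$ and $p_\lambda\circ(F'+G')=(p_\lambda\circ F')+(p_\lambda\circ G')$: one must check these are genuinely equalities of \emph{pointed homotopy classes} into the polyhedron $X_\lambda$, using that $\mathbf p$ is an expansion and Moszynska's description of the group structure on $Sh((S^k,*),(X,x))$ via representatives $[(f_\lambda)]$. Once this compatibility of post-composition $p_\lambda\circ(-)$ with the co-$H$-structure of $(S^k,*)$ is in hand, continuity of both operations is immediate from the inclusions of basic sets above, and therefore $\check\pi_k^{top}(X,x)$ is a topological group. (Hausdorffness, asserted elsewhere, follows separately from separation of points by the maps $p_\lambda$.)
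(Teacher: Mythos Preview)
Your proposal is correct and follows essentially the same approach as the paper: both arguments verify continuity of inversion and multiplication directly against the basis $\{V^F_\lambda\}$, using the levelwise identities $p_\lambda\circ(F+G)=(p_\lambda\circ F)+(p_\lambda\circ G)$ and $p_\lambda\circ(-F)=-(p_\lambda\circ F)$ to show $V^F_\lambda\times V^G_\lambda\to V^{F+G}_\lambda$ and $V^{-F}_\lambda\to V^F_\lambda$. The only cosmetic difference is that the paper phrases inversion as ``$G\in V^F_\lambda\Rightarrow G^{-1}\in V^{F^{-1}}_\lambda$'' rather than computing preimages, and it does not pause over the bookkeeping you flag as a subtlety.
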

\begin{proof}
First we show that $\phi:\check{\pi}_k^{top}(X,x)\rightarrow \check{\pi}_k^{top}(X,x)$ given by $\phi(F=<[(f,f_{\lambda})]>)=F^{-1}$ is continuous, where $F^{-1}:(S^k,*)\rightarrow(X,x)$ is represented by $\mathbf{f}^{-1}=(f,f^{-1}_{\lambda}):(S^k,*)\rightarrow(\mathbf{X},\mathbf{x})$ and $f^{-1}_{\lambda}:(S^k,*)\rightarrow(X_{\lambda},x_{\lambda})$ is the inverse loop of $f_{\lambda}$. Let $V_{\lambda}^{F^{-1}}$ be an open neighbourhood of $F^{-1}$ in $\check{\pi}_k^{top}(X,x)$. We know that for any $G=<[(g,g_{\lambda})]>\in V_{\lambda}^F$, $p_{\lambda}\circ G=p_{\lambda}\circ F$ as pointed homotopy classes in $X_{\lambda}$ or equivalently $g_{\lambda}\simeq f_{\lambda}$ rel $\{*\}$. Then $g_{\lambda}^{-1}\simeq f_{\lambda}^{-1}$ rel $\{*\}$ and so $\phi(G)\in  V_{\lambda}^{F^{-1}}$. Therefore the map $\phi$ is continuous.

Second, we show that the map $m:\check{\pi}_k^{top}(X,x)\times \check{\pi}_k^{top}(X,x)\rightarrow \check{\pi}_k^{top}(X,x)$ given by $m(F,G)=F+G$ is continuous, where $F+G$ is the shape morphism represented by $\mathbf{f}+\mathbf{g}=(f, f_{\lambda}+g_{\lambda}):(S^k,*)\rightarrow(\mathbf{X},\mathbf{x})$ and $f_{\lambda}+g_{\lambda}$ is the concatenation of paths. Let $V_{\lambda}^{F+G}$ be an open neighbourhood of $F+G$ in $\check{\pi}_k^{top}(X,x)$. For any $(K,H)\in V_{\lambda}^F\times V_{\lambda}^G$, we have $p_{\lambda}\circ (K+H)=(p_{\lambda}\circ K)+ (p_{\lambda}\circ H)=(p_{\lambda}\circ F)+ (p_{\lambda}\circ G)=p_{\lambda}\circ ( F+G)$ as pointed homotopy classes in $X_{\lambda}$. Hence $m(K,H)\in  V_{\lambda}^{F+G}$ and so $m$ is continuous.
\end{proof}
\begin{remark}\label{re}
The following statements hold for the topological group $\check{\pi}_k^{top}(X,x)$ for all $k\in \Bbb{N}$:\\
(i) For every pointed topological space $(X,x)$, the topological group $\check{\pi}_k^{top}(X,x)$ is Tychonoff and in particular it is Hausdorff (see \cite[Corollary 3]{CM}).\\
(ii) If $F:(X,x)\rightarrow (Y,y)$ is a shape morphism, then $F_*:\check{\pi}_k^{top}(X,x)\rightarrow \check{\pi}_k^{top}(Y,y)$ is continuous (see \cite[Corollary 2]{CM}).\\
(iii) If $(X,x)$ and $(Y,y)$ are two pointed topological spaces and $Sh(X,x)=Sh(Y,y)$, then $\check{\pi}_k^{top}(X,x)\cong\check{\pi}_k^{top}(Y,y)$ as topological groups (see \cite[Corollary 2]{CM}).
\end{remark}
\begin{corollary}
For any $k\in \Bbb{N}$, $\check{\pi}_k^{top}(-)$ is a functor from the pointed shape category of spaces to the category of Hausdorff topological groups.
\end{corollary}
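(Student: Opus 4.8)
The plan is to assemble the corollary directly from Theorem \ref{quasi} and Remark \ref{re}, since all the analytic content has already been established; what remains is purely the bookkeeping needed to exhibit $\check{\pi}_k^{top}(-)$ as a genuine functor. First I would specify the assignment on objects: to each pointed topological space $(X,x)$ associate the group $\check{\pi}_k^{top}(X,x)$, which by Theorem \ref{quasi} is a topological group and by Remark \ref{re}(i) is Tychonoff, hence Hausdorff; so the object assignment does land in the category of Hausdorff topological groups. Next I would specify the assignment on morphisms: given a shape morphism $F:(X,x)\rightarrow (Y,y)$, send it to the induced homomorphism $F_*:\check{\pi}_k^{top}(X,x)\rightarrow \check{\pi}_k^{top}(Y,y)$ defined on representatives by $F_*(<\mathbf{g}>)=<\mathbf{f}\mathbf{g}>$ where $\mathbf{f}$ represents $F$. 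That $F_*$ is a group homomorphism is the classical fact about shape groups (it is the definition of $\check{\pi}_k$ as a functor on $Sh_*$, cf. \cite{M,MS}); that $F_*$ is continuous is exactly Remark \ref{re}(ii). Hence each morphism of the pointed shape category is sent to a morphism of Hausdorff topological groups.

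It then remains to verify the two functoriality axioms. For identities, the identity shape morphism $1_{(X,x)}$ is represented by the identity pro-morphism $1_{\mathbf{X}}$, so $(1_{(X,x)})_*(<\mathbf{g}>)=<1_{\mathbf{X}}\mathbf{g}>=<\mathbf{g}>$; thus $(1_{(X,x)})_*$ is the identity map on $\check{\pi}_k^{top}(X,x)$. For composition, given shape morphisms $F:(X,x)\rightarrow (Y,y)$ and $G:(Y,y)\rightarrow (Z,z)$ represented by $\mathbf{f}$ and $\mathbf{g}$ respectively, the composite $GF$ is represented by $\mathbf{g}\mathbf{f}$ (by the definition of composition in $Sh_*$), whence $(GF)_*(<\mathbf{h}>)=<(\mathbf{g}\mathbf{f})\mathbf{h}>=<\mathbf{g}(\mathbf{f}\mathbf{h})>=G_*(<\mathbf{f}\mathbf{h}>)=G_*\bigl(F_*(<\mathbf{h}>)\bigr)$, using associativity of composition in pro-HTop$_*$. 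Therefore $(GF)_*=G_*F_*$, and $\check{\pi}_k^{top}(-)$ is a functor.

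There is essentially no obstacle here, since the algebraic functoriality of $\check{\pi}_k$ is standard and the topological refinement (continuity of induced maps, Hausdorffness of the target) has already been recorded in Remark \ref{re}; the only mild care needed is to check that the group operation and the topology are compatibly natural, i.e. that the $F_*$ that is a homomorphism of abstract groups is the same $F_*$ that is continuous — and this is immediate because both are defined by the same formula $<\mathbf{g}>\mapsto<\mathbf{f}\mathbf{g}>$ on representatives, the formula used in \cite{CM} to establish Remark \ref{re}(ii). For $k=1$ one should note that $\check{\pi}_1^{top}$ is still a functor to Hausdorff topological groups even though the groups need not be abelian; nothing in the argument above used commutativity. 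Hence the corollary follows.
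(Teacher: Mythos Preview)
Your proof is correct and follows essentially the same approach as the paper, which simply states that the corollary follows from Theorem \ref{quasi} and Remark \ref{re}. You have merely unpacked that one-line proof by explicitly verifying the object and morphism assignments and the functoriality axioms, all of which is routine given the cited results.
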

\begin{proof}
This follows from Theorem \ref{quasi} and Remark \ref{re}.
\end{proof}

An inverse system $\mathbf{X}=(X_{\lambda},p_{\lambda\lambda'},\Lambda)$ in HPol is said to be 1-movable provided every $\lambda\in \Lambda$ admits $\lambda'\geq\lambda$ such that for any $\lambda''\geq\lambda$, any polyhedron $P$ with dim $P\leq 1$ and any map $h:P\rightarrow X_{\lambda'}$, there is a map $r:P\rightarrow X_{\lambda''}$ such that $p_{\lambda\lambda''}r\simeq p_{\lambda\lambda'}h$. We say that a topological space $X$ is 1-movable  provided it admits an HPol-expansion $\mathbf{p}:X\rightarrow \mathbf{X}$ such that $\mathbf{X}$ is 1-movable (see \cite{MS}).
\begin{corollary}\label{ad1}
Let $(X,x)$ and $(Y,y)$ be two pointed metric continua and $(X,x)$ be 1-movable. If $X$ and $Y$ have the same homotopy type, then $\check{\pi}_k^{top}(X,x)\cong\check{\pi}_k^{top}(Y,y)$ for all $k\in \Bbb{N}$.
\end{corollary}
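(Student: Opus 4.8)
The plan is to reduce the statement to Remark \ref{re}(iii) by promoting the given unpointed homotopy equivalence to a \emph{pointed} shape equivalence $(X,x)\rightarrow(Y,y)$; the hypothesis that $(X,x)$ is 1-movable is exactly what will make this possible, since it allows one to control basepoints.

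First I would observe that, because the shape functor $\mathcal{S}:HTop\rightarrow Sh$ factors through the homotopy category, a homotopy equivalence $f:X\rightarrow Y$ induces a shape isomorphism $\mathcal{S}(f):X\rightarrow Y$, so $X$ and $Y$ have the same (unpointed) shape. Since 1-movability is a shape invariant and $X$ is 1-movable, $Y$ is 1-movable as well, and hence $(Y,y)$ and $(Y,f(x))$ are pointed 1-movable (recall that a continuum is 1-movable if and only if it is pointed 1-movable, at each of its points). Next I would regard $f$ as a pointed map $f:(X,x)\rightarrow(Y,y_0)$ with $y_0=f(x)$, whose underlying unpointed shape morphism is the isomorphism $\mathcal{S}(f)$, and invoke the classical theory of pointed 1-movable metric continua (see \cite{MS}, and also \cite{M}): a pointed map between pointed 1-movable continua whose underlying unpointed shape morphism is an isomorphism is itself a pointed shape equivalence, and a change of basepoint within a 1-movable continuum is a pointed shape equivalence. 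Composing $f:(X,x)\rightarrow(Y,y_0)$ with the basepoint change $(Y,y_0)\rightarrow(Y,y)$ then gives $Sh(X,x)=Sh(Y,y)$, and Remark \ref{re}(iii) yields $\check{\pi}_k^{top}(X,x)\cong\check{\pi}_k^{top}(Y,y)$ as topological groups for all $k\in\Bbb{N}$.

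The hard part will be the middle step, the passage from an unpointed shape equivalence to a pointed one, which genuinely fails for arbitrary pointed continua. To carry it out in detail (rather than merely cite it), one would fix an HPol$_*$-expansion $\mathbf{p}:(X,x)\rightarrow(\mathbf{X},\mathbf{x})$ and use 1-movability to produce, for each $\lambda$ and each 1-dimensional polyhedron $P$ --- in particular for the arcs needed to transport basepoints along the free homotopies $gf\simeq 1_X$ and $fg\simeq 1_Y$ associated with a homotopy inverse $g$ of $f$ --- lifts $r:P\rightarrow X_{\lambda''}$ with $p_{\lambda\lambda''}r\simeq p_{\lambda\lambda'}h$; these are precisely what is needed to rigidify the basepoint data and upgrade $\mathcal{S}(f)$ to an isomorphism in pro-HTop$_*$. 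The remaining steps are formal.
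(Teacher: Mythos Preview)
Your approach is correct and essentially the same as the paper's: both reduce to Remark~\ref{re}(iii) by establishing $Sh(X,x)=Sh(Y,y)$, starting from the unpointed shape equivalence $Sh(X)=Sh(Y)$ forced by the homotopy equivalence. The only difference is that the paper compresses your entire middle step into a single citation of \cite[Theorem 2.8.10]{MS}, which directly gives $Sh(X,x)=Sh(Y,y)$ from $Sh(X)=Sh(Y)$ under the 1-movability hypothesis on $(X,x)$, whereas you unpack that theorem into its ingredients (shape invariance of 1-movability, basepoint change via joinability, and lifting the unpointed isomorphism to a pointed one).
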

\begin{proof}
 Since $X$ and $Y$ have the same homotopy type, $Sh(X)=Sh(Y)$. Since $(X,x)$ is 1-movable and $(X,x)$ and $(Y,y)$ are two pointed metric continua, $Sh(X,x)=Sh(Y,y)$ by \cite[Theorem 2.8.10]{MS}. Hence by Remark \ref{re} part (iii) the result holds.
\end{proof}

Let $\mathbf{X}=(X_n, p_{nn+1})$ be an inverse sequence of compact ANR's and maps and let $\mathbf{p}:X\rightarrow \mathbf{X}$ be an inverse limit. The points $x,x'\in X$ are said to be $\mathbf{p}$-joinable if there exist paths $\omega_n$ in $X_n$, $n\in \Bbb{N}$, such that \\
\begin{align}
\omega_n(0) & = x_n=p_n(x)\nonumber\\
\omega_n(1)& = x'_n=p_n(x')\nonumber\\
p_{nn+1}\omega_{n+1}&\simeq\omega_n \hspace{1pt} rel\{0,1\}.\nonumber
\end{align}
Two points $x,y$ of a metric compactum $X$ are called joinable provided they are $\mathbf{p}$-joinable for some inverse limit $\mathbf{p}:X\rightarrow \mathbf{X}$, where $\mathbf{X}$ is an inverse sequence of compact ANR's. A metric compactum space $X$ is said to be joinable if every two points of $X$ be joinable. Note that any pointed 1-movable metric continuum $(X,x)$ is joinable \cite[Theorem 2.8.8]{MS}. For example every pointed continuum $(X,x)$ in $\Bbb{R}^2$ is 1-movable \cite[Example 2.8.3]{MS} and every pointed metric path connected continuum is 1-movable \cite[Remark 2.8.5]{MS}. Moreover, for a metric continuum space with joinable points $x,x'\in X$, we have $\check{\pi}_k(X,x)\cong \check{\pi}_k(X,x')$, for all $k\in\Bbb{N}$ \cite[Theorem 2.8.9]{MS}. We have the following result which is find in \cite{Mo1}, too.
\begin{theorem}\label{ad2}
Let $X$ be a metric continuum space with joinable points $x,x'\in X$. Then $\check{\pi}_k^{top}(X,x)\cong \check{\pi}_k^{top}(X,x')$, for all $k\in\Bbb{N}$.
\end{theorem}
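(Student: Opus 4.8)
The plan is to upgrade the group isomorphism $\Phi\colon\check{\pi}_k(X,x)\to\check{\pi}_k(X,x')$ furnished by \cite[Theorem 2.8.9]{MS} to an isomorphism of topological groups, i.e.\ to show that $\Phi$ and $\Phi^{-1}$ are continuous for the topologies of Section~\ref{S}. Since $X$ is a metric continuum with joinable points $x,x'$, we may fix an inverse sequence $\mathbf{X}=(X_n,p_{nn+1})$ of compact ANR's together with an inverse limit $\mathbf{p}\colon X\to\mathbf{X}$ and paths $\omega_n$ in $X_n$ joining $x_n=p_n(x)$ to $x'_n=p_n(x')$ with $p_{nn+1}\omega_{n+1}\simeq\omega_n$ rel $\{0,1\}$. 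As each compact ANR has the homotopy type of a polyhedron, this sequence may be regarded as an inverse sequence in pro-HPol$_*$, yielding HPol$_*$-expansions $\mathbf{p}\colon(X,x)\to(\mathbf{X},\mathbf{x})$ and $\mathbf{p}\colon(X,x')\to(\mathbf{X},\mathbf{x}')$ over the \emph{same} index sequence. By \cite{CM} the topology $T_\mathbf{p}$ does not depend on the chosen expansion, so the bases $\{V^F_n\mid F\in\check{\pi}_k(X,x),\ n\in\Bbb{N}\}$ and $\{V^H_n\mid H\in\check{\pi}_k(X,x'),\ n\in\Bbb{N}\}$ computed from these two expansions describe the topologies of $\check{\pi}_k^{top}(X,x)$ and $\check{\pi}_k^{top}(X,x')$ respectively.

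Next I would recall how $\Phi$ arises. For each $n$ the path $\omega_n$ induces the change-of-basepoint bijection $\beta_{\omega_n}\colon\pi_k(X_n,x_n)\to\pi_k(X_n,x'_n)$ (conjugation by $\omega_n$ when $k=1$, the usual $\omega_n$-action when $k\ge 2$), which depends only on the class of $\omega_n$ rel $\{0,1\}$ and is a group isomorphism. The relation $p_{nn+1}\omega_{n+1}\simeq\omega_n$ rel $\{0,1\}$ gives $p_{nn+1*}\circ\beta_{\omega_{n+1}}=\beta_{\omega_n}\circ p_{nn+1*}$, so $(\beta_{\omega_n})_{n}$ is an isomorphism of inverse sequences and hence induces an isomorphism $\Phi=\lim_{\leftarrow}\beta_{\omega_n}$ on the inverse limits; identifying $\check{\pi}_k(X,x)$ with $\lim_{\leftarrow}\pi_k(X_n,x_n)$ via $F\mapsto(p_n\circ F)_n$ (and similarly for $x'$), this is precisely the isomorphism of \cite[Theorem 2.8.9]{MS}. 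In particular $p_n\circ\Phi(F)=\beta_{\omega_n}(p_n\circ F)$ in $\pi_k(X_n,x'_n)$ for every $F\in\check{\pi}_k(X,x)$ and every $n$.

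With this in hand the continuity is immediate. Fix $F\in\check{\pi}_k(X,x)$ and $n\in\Bbb{N}$; for any $G\in\check{\pi}_k(X,x)$ the condition $p_n\circ\Phi(F)=p_n\circ\Phi(G)$ reads $\beta_{\omega_n}(p_n\circ F)=\beta_{\omega_n}(p_n\circ G)$, which is equivalent to $p_n\circ F=p_n\circ G$ because $\beta_{\omega_n}$ is injective. Hence $\Phi(V^F_n)=V^{\Phi(F)}_n$, so $\Phi$ maps the basis of $\check{\pi}_k^{top}(X,x)$ bijectively onto the basis of $\check{\pi}_k^{top}(X,x')$; therefore $\Phi$ is simultaneously open and continuous, i.e.\ a homeomorphism, and being a group isomorphism it is an isomorphism of topological groups. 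The one place deserving care --- and the only real obstacle --- is the first paragraph: one must check that an inverse sequence of compact ANR's (with the two families of basepoints) genuinely serves as an HPol$_*$-expansion of $(X,x)$ and of $(X,x')$ and that the resulting bases $\{V^F_n\}$ describe the intrinsic topologies of Remark~\ref{re}; the compatibility $p_{nn+1*}\beta_{\omega_{n+1}}=\beta_{\omega_n}p_{nn+1*}$ and the bijectivity of $\beta_{\omega_n}$ are then entirely routine.
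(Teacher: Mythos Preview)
Your argument is correct, but it takes a genuinely different route from the paper's. The paper observes that \cite[Theorem~2.8.9]{MS} in fact yields the stronger conclusion $Sh(X,x)=Sh(X,x')$ (equality of pointed shape types, not merely of shape groups), and then simply invokes Remark~\ref{re}(iii): equal pointed shapes give isomorphic $\check{\pi}_k^{top}$. That is a two-line proof. You instead unwind the construction: build the isomorphism level-by-level via the change-of-basepoint maps $\beta_{\omega_n}$, check compatibility with the bonding maps, pass to the inverse limit, and verify directly that $\Phi(V^F_n)=V^{\Phi(F)}_n$. This is longer but more explicit and self-contained --- it uses only the raw joinability data rather than the full pointed-shape equivalence from \cite{MS} --- and it makes transparent exactly why the topology is preserved (each $\beta_{\omega_n}$ is a bijection of discrete groups, so the induced map on inverse limits is automatically a homeomorphism; this is essentially Theorem~\ref{Bra} applied twice). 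The caveat you flag, that the compact-ANR sequence with either basepoint serves as an HPol$_*$-expansion of $(X,x)$ and of $(X,x')$, is standard: compact ANR's have the homotopy type of compact polyhedra, and one may then appeal to \cite[Remark~1]{FZ}, which the paper itself uses for the same purpose elsewhere.
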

\begin{proof}
Since $X$ is a metric continuum space and $x,x'\in X$ are joinable points, $Sh(X,x)\cong Sh(X,x')$ by \cite[Theorem 2.8.9]{MS}. Hence by Remark \ref{re} part (iii) the result holds.
\end{proof}

Recall that $\Bbb{N}$-compact spaces are those which can be
embedded in $\Bbb{N}^m$ as closed subspaces, for some cardinal number $m$, where $\Bbb{N}$ is the set of natural numbers with discrete topology \cite{N}.

According to \cite{CM}, a cardinal number $m$ is said to be measurable if a set $X$ of cardinal $m$ admits a countable additive function $\mu$ such that $\mu(X)=1$ and $\mu(x)=0$ for every $x\in X$.
A discrete space is $\Bbb{N}$-compact if and only if its cardinal is nonmeasurable (see \cite{GJ}).

As mentioned in \cite{CM}, the question whether every cardinal number is nonmeasurable is known as the problem
of measurability of cardinal numbers. The assumption that all cardinal numbers are
nonmeasurable is consistent with the axioms of set theory; on the other hand it is not known
whether the assumption of the existence of measurable cardinals is also consistent with the
axioms of set theory. See \cite{E}, \cite{GJ} for further information and references about measurability of cardinal numbers.

The following result follows from \cite[Corollary 4]{CM}.
\begin{theorem}\label{n-com}
If the space $X$ has nonmeasurable cardinal, then $\check{\pi}_k^{top}(X,x)$ is $\Bbb{N}$-compact, for all $k\in \Bbb{N}$.
\end{theorem}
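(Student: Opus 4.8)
The plan is to reduce the statement to \cite[Corollary 4]{CM} via Moszyńska's identification. Recall that, as a topological group, $\check{\pi}_k^{top}(X,x)$ is the set $Sh((S^k,*),(X,x))$ equipped with the topology $T_{\mathbf p}$ whose basic open sets are the $V^F_{\lambda}$; this is exactly the pointed analogue of the topology on $Sh(Y,X)$ introduced in \cite{CM}, and the construction of \cite{CM} goes through verbatim in $\mathrm{pro\text{-}HPol_*}$ because it uses only that $Sh((S^k,*),(X,x))$ is an inverse limit of \emph{discrete} sets of (pointed) homotopy classes. Fix an $\mathrm{HPol_*}$-expansion $\mathbf p:(X,x)\to(\mathbf X,\mathbf x)$ with $\mathbf X=((X_{\lambda},x_{\lambda}),p_{\lambda\lambda'},\Lambda)$. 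The first step is to record the structural description of $T_{\mathbf p}$: the assignment $F\mapsto (p_{\lambda}\circ F)_{\lambda\in\Lambda}$ embeds $Sh((S^k,*),(X,x))$ as a closed subspace of the product $\prod_{\lambda\in\Lambda}[(S^k,*),(X_{\lambda},x_{\lambda})]$, where each set of pointed homotopy classes carries the discrete topology; indeed, by definition each $V^F_{\lambda}$ is the preimage of a point under the $\lambda$-th projection.

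Next I would invoke the permanence properties of $\Bbb N$-compactness: a discrete space is $\Bbb N$-compact precisely when its cardinal is nonmeasurable, and the class of $\Bbb N$-compact spaces is closed under arbitrary topological products and under closed subspaces (see \cite{E,GJ,N}). Combining this with the closed embedding above, it suffices to show that every factor $[(S^k,*),(X_{\lambda},x_{\lambda})]$ has nonmeasurable cardinality: then each factor is $\Bbb N$-compact (being discrete), hence so is the product, hence so is the closed subspace $Sh((S^k,*),(X,x))=\check{\pi}_k^{top}(X,x)$.

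It remains to control these cardinalities from the hypothesis that $X$ has nonmeasurable cardinal. One bounds $|[(S^k,*),(X_{\lambda},x_{\lambda})]|\le |C(S^k,X_{\lambda})|\le |X_{\lambda}|^{\aleph_0}$, using that $S^k$ is separable and $X_{\lambda}$ is Hausdorff, and since the nonmeasurable cardinals form an initial segment closed under exponentiation (the least measurable cardinal, if it exists, is strongly inaccessible, hence a strong limit), nonmeasurability of $|X_{\lambda}|$ propagates to $|X_{\lambda}|^{\aleph_0}$. The genuinely delicate point — and the main obstacle — is thus to choose the $\mathrm{HPol_*}$-expansion so that each polyhedron $X_{\lambda}$ (and the index set $\Lambda$) has cardinality controlled by a cardinal invariant of $X$, so that "$X$ has nonmeasurable cardinal'' really does force all the discrete factors to have nonmeasurable cardinality; this cardinality bookkeeping is precisely what is carried out in \cite[Corollary 4]{CM}. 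Granting it, the argument above applies for every $k\in\Bbb N$, which is the assertion; the only remaining routine point is the passage from the unpointed framework of \cite{CM} to the pointed one, which, as noted, needs no new idea.
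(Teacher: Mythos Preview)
Your proposal is correct and takes essentially the same approach as the paper: both simply invoke \cite[Corollary 4]{CM}, transferred to the pointed setting via the identification $\check{\pi}_k^{top}(X,x)=Sh((S^k,*),(X,x))$. The paper's own treatment is in fact terser than yours---it records only the single sentence ``The following result follows from \cite[Corollary 4]{CM}'' and gives no further elaboration---so your sketch of the closed-subspace-of-a-product argument and the cardinality bookkeeping is a faithful unpacking of what that citation is doing.
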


Note that if we assume the nonexistence of measurable cardinals, which is
consistent with the usual axioms of set theory, by Theorem \ref{n-com}, we conclude that $\check{\pi}_k^{top}(X,x)$ is $\Bbb{N}$-compact for all $k\in \Bbb{N}$ and for all topological space $X$.

Let $\mathbf{p}:X\rightarrow (X_{\lambda},p_{\lambda\lambda'},\Lambda)$ be an HPol-expansion of $X$. Brazas \cite{Br} gave a topology $T_{inv}$ on $\check{\pi}_k(X)$ for all $k\in \Bbb{N}$. Here we denote it by $\check{\pi}_k^{inv}(X)$. Since $\pi_k^{qtop}(X_{\lambda})$ is discrete for all $k\in \Bbb{N}$, this topology is pro-discrete. He defined the $k$th topological shape group of $X$ as the limit $\check{\pi}^{inv}_k(X)= \displaystyle{\lim_{\leftarrow}\pi_k^{qtop}(X_{\lambda})}$ which is an inverse limit of discrete groups and so it is a Hausdorff topological group  \cite[Remark 2.19]{Br}. In the follow, we show that these two topologies on $\check{\pi}_k(-)$ are equivalent, i.e. $\check{\pi}_k^{inv}(X,x)=\check{\pi}_k^{top}(X,x)$ for every pointed topological space $(X,x)$. It should be mentioned that with the second view at this topology we will obtain some results which seem are not obtained with the topology of inverse limit such as Corollary \ref{ad1}, Theorems \ref{re2} and \ref{n-com}. Note that the topology on $Sh(X,Y)$ in the proof of the above results is effective.

\begin{theorem}\label{Bra}
Let $(X,x)$ be a pointed topological space and $\mathbf{p}:(X,x)\rightarrow (\mathbf{X},\mathbf{x})=((X_{\lambda},x_{\lambda}),p_{\lambda\lambda'},\Lambda)$ be an HPol$_*$-expansion of $(X,x)$. Then for all $k\in \Bbb{N}$,
$\check{\pi}_k^{top}(X,x)\cong\displaystyle{\lim_{\leftarrow}\pi_k^{qtop}(X_{\lambda},x_{\lambda})}$ as topological groups.
\end{theorem}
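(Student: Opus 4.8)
The plan is to construct an explicit isomorphism of topological groups between $\check{\pi}_k^{top}(X,x)$ and $\varprojlim \pi_k^{qtop}(X_\lambda,x_\lambda)$ and then check that it and its inverse are continuous. As an abstract group, $\check{\pi}_k(X,x)$ is already known to be (isomorphic to) $\varprojlim \pi_k(X_\lambda,x_\lambda)$: a shape morphism $F = \langle[(f_\lambda)]\rangle : (S^k,*)\to(\mathbf{X},\mathbf{x})$ is precisely a compatible family of pointed homotopy classes $([f_\lambda])_\lambda$ with $p_{\lambda\lambda'*}[f_{\lambda'}] = [f_\lambda]$, and the group operation was defined componentwise, so the map $\Theta : F \mapsto ([f_\lambda])_{\lambda\in\Lambda}$ is a well-defined group isomorphism onto the subgroup $\varprojlim \pi_k(X_\lambda,x_\lambda) \subseteq \prod_\lambda \pi_k(X_\lambda,x_\lambda)$. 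Since each $X_\lambda\in HPol$, the quasitopological group $\pi_k^{qtop}(X_\lambda,x_\lambda)$ is discrete (this is used by Brazas and follows from the polyhedral structure), so the target carries the subspace topology inherited from the product of discrete spaces, i.e. the pro-discrete topology, whose basic open sets are of the form $\{(\alpha_\lambda) : \alpha_{\lambda_0} = \beta_{\lambda_0}\}$ for a fixed index $\lambda_0$ and a fixed element $\beta_{\lambda_0}\in\pi_k(X_{\lambda_0},x_{\lambda_0})$.

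The heart of the argument is then simply to match the two bases. First I would take a basic open set $V_\lambda^F = \{G : p_\lambda\circ G = p_\lambda\circ F \text{ as pointed homotopy classes in }X_\lambda\}$ of the topology $T_{\mathbf p}$ on $Sh((S^k,*),(X,x))$. Unwinding the definitions, $p_\lambda\circ G = p_\lambda\circ F$ in $\pi_k(X_\lambda,x_\lambda)$ means exactly that $g_\lambda\simeq f_\lambda$ rel $\{*\}$, i.e. the $\lambda$-component of $\Theta(G)$ equals the $\lambda$-component of $\Theta(F)$. Hence $\Theta(V_\lambda^F) = \{(\alpha_{\lambda'})_{\lambda'}\in\varprojlim\pi_k(X_{\lambda'},x_{\lambda'}) : \alpha_\lambda = [f_\lambda]\}$, which is precisely the trace on the inverse limit of a subbasic (indeed basic, since $\Lambda$ is directed) open set of the pro-discrete topology. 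This shows $\Theta$ carries a basis of $T_{\mathbf p}$ onto a basis of the inverse-limit topology, so $\Theta$ is a homeomorphism. Combined with the group isomorphism statement, $\Theta$ is an isomorphism of topological groups. Finally, because both $\check{\pi}_k^{top}(X,x)$ (by Remark \ref{re}(i), or \cite[Corollary 3]{CM}) and $\varprojlim\pi_k^{qtop}(X_\lambda,x_\lambda)$ (by \cite[Remark 2.19]{Br}) are known to be Hausdorff topological groups, everything is consistent, and one may invoke that the topology $T_{\mathbf p}$ is independent of the chosen expansion (the passage from $Sh$ to $Sh_*$ version, noted after the definition of $V_\lambda^F$) to conclude the statement does not depend on $\mathbf p$.

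The one genuine subtlety, and the step I expect to be the main obstacle, is the identification of the topology on the target: namely, verifying that $\pi_k^{qtop}(X_\lambda,x_\lambda)$ is discrete for $X_\lambda\in HPol$, so that $\varprojlim\pi_k^{qtop}(X_\lambda,x_\lambda)$ really does carry the pro-discrete topology and its basic opens are the single-coordinate cylinders described above. Once that is in hand, one must be slightly careful that the inverse limit topology (as a subspace of $\prod\pi_k^{qtop}(X_\lambda)$) has, as a base at a point, exactly the sets cutting out a single coordinate — this uses directedness of $\Lambda$ to absorb finite intersections of coordinate conditions into one, via the bonding maps $p_{\lambda\lambda'*}$. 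I would also double-check the well-definedness of $\Theta$ against the equivalence relation defining shape morphisms: two representatives $(f_\lambda)$, $(f'_\lambda)$ give the same $F$ iff $f_\lambda\simeq f'_\lambda$ rel $\{*\}$ for all $\lambda$ (using (E2) and directedness), which is exactly equality of the images in $\prod\pi_k(X_\lambda,x_\lambda)$, so no collision occurs. The rest is bookkeeping.
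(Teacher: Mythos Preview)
Your proposal is correct and follows essentially the same route as the paper. The only difference is packaging: the paper invokes \cite[Theorem~2]{CM} to obtain directly that $Sh((S^k,*),(X,x))=\varprojlim Sh((S^k,*),(X_\lambda,x_\lambda))$ in Top, and then identifies each $\check{\pi}_k^{top}(X_\lambda,x_\lambda)$ with the discrete group $\pi_k^{qtop}(X_\lambda,x_\lambda)$, whereas you unwind that cited homeomorphism by hand, matching the basic open sets $V_\lambda^F$ with the single-coordinate cylinders of the pro-discrete topology.
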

\begin{proof}
Let $k\in \Bbb{N}$. Since $\mathbf{p}:(X,x)\rightarrow (\mathbf{X},\mathbf{x})=((X_{\lambda},x_{\lambda}),p_{\lambda\lambda'},\Lambda)$ is an HPol$_*$-expansion of $(X,x)$, $Sh((S^k,*),(X,x))=\displaystyle{\lim_{\leftarrow}Sh((S^k,*),(X_{\lambda},x_{\lambda}))}$ in Top,  by \cite[Theorem 2]{CM}. So $\check{\pi}_k^{top}(X,x)\cong\displaystyle{\lim_{\leftarrow}\check{\pi}_k^{top}(X_{\lambda},x_{\lambda})}$ as topological groups. Since $X_{\lambda}$'s are polyhedra,  $\check{\pi}_k^{top}(X_{\lambda},x_{\lambda})=\pi^{qtop}_k(X_{\lambda},x_{\lambda})$ which are discrete. Hence $\check{\pi}_k^{top}(X,x)\cong\displaystyle{\lim_{\leftarrow}\pi_k^{qtop}(X_{\lambda},x_{\lambda})}$ as topological groups.
\end{proof}
\begin{corollary}\label{invlim}
Let  $(X,x)=\displaystyle{\lim_{\leftarrow}(X_i,x_i)}$, where $X_i$'s are compact polyhedra. Then for all $k\in \Bbb{N}$,
\[\check{\pi}_k^{top}(X,x)\cong\displaystyle{\lim_{\leftarrow}\pi_k^{qtop}(X_i,x_i)}.\]
\end{corollary}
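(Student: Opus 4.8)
The strategy is to recognise the given inverse-limit presentation of $(X,x)$ as an HPol$_*$-expansion and then to invoke Theorem~\ref{Bra} directly.

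First I would observe that, since each $(X_i,x_i)$ is a compact polyhedron, the inverse system $\mathbf{X}=((X_i,x_i),p_{ij},I)$ lies in pro-HPol$_*$, and that by the classical theory of expansions of compacta (see \cite[Ch.~I, \S 5]{MS}) the natural projections $p_i:(X,x)\rightarrow(X_i,x_i)$ of the inverse limit constitute an HPol$_*$-expansion $\mathbf{p}:(X,x)\rightarrow\mathbf{X}$. Concretely, one must check (E1) and (E2): given a map $h:X\rightarrow P$ into a polyhedron, compactness of $X$ and of the $X_i$ forces $h$ to factor, up to homotopy, through some projection $p_i$; and if $f_0p_i\simeq f_1p_i$ rel $\{*\}$, then the same compactness argument yields $j\geq i$ with $f_0p_{ij}\simeq f_1p_{ij}$ rel $\{*\}$. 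Both of these are standard facts about compact spaces from \cite{MS} and need not be reproved here.

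With $\mathbf{p}$ in hand, Theorem~\ref{Bra} applies verbatim to this particular expansion, giving $\check{\pi}_k^{top}(X,x)\cong\displaystyle{\lim_{\leftarrow}\pi_k^{qtop}(X_i,x_i)}$ as topological groups; along the way one uses once more that each $X_i$, being a polyhedron, satisfies $\check{\pi}_k^{top}(X_i,x_i)=\pi_k^{qtop}(X_i,x_i)$, which is discrete.

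The only genuine content beyond Theorem~\ref{Bra} is the identification of the limit projections with an HPol$_*$-expansion, and I expect the verification that maps from the compact space $X$ into a polyhedron factor up to homotopy through a finite stage (properties (E1) and (E2)) to be the main — though entirely classical — point; it is exactly what the expansion theory of compacta in \cite{MS} provides, so it can be quoted rather than redone.
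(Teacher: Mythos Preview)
Your proposal is correct and follows essentially the same route as the paper: both arguments reduce to showing that the inverse-limit projections form an HPol$_*$-expansion of $(X,x)$ and then invoke Theorem~\ref{Bra}. The only cosmetic difference is that the paper first notes $X$ is compact Hausdorff (Tychonoff plus closedness of the limit in the product) and then quotes \cite[Remark~1]{FZ} for the expansion property, whereas you appeal directly to \cite[Ch.~I, \S5]{MS}; these are two references for the same classical fact.
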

\begin{proof}
Sine $X_i$'s are compact, $\prod_{i\in \Bbb{N}}X_i$ is compact by \cite[Theorem 3.2.4]{E} and since $X_i$'s are Hausdorff, $X=\displaystyle{\lim_{\leftarrow}X_i}$ is a closed subspace of $\prod_{i\in \Bbb{N}}X_i$ by \cite[Proposition 2.5.1]{E}. Hence $\prod_{i\in \Bbb{N}}X_i$ is compact by \cite[Theorem 3.1.2]{E}. Therefore the limit $\mathbf{p}:X\rightarrow (X_i, p_{ii+1},\Bbb{N})$ is an HPol-expansion of $X$ by \cite[Remark 1]{FZ} and so the result holds by Theorem \ref{Bra}.
\end{proof}

Ghane et al. \cite{G2} proved that  $\pi_n^{qtop} (X,x)\cong\displaystyle{\lim_{\leftarrow}\pi_n^{qtop}(X_i,x_i)}$ for n-Hawaiian like space $X$ for all $n\geq 2$. An n-Hawaiian like space $X$ is the natural inverse limit, $\displaystyle{\lim_{\leftarrow}(Y_i^{n},y_i^*)}$, where $(Y_i^{n},y_i^*)=\bigvee_{j\leq i}(X_j^{n},x_j^*)$ is the wedge of $X_j^{n}$'s, where $X_j^{n}$'s are $(n-1)$-connected, locally $(n-1)$-connected, semilocally $n$-connected, and compact CW spaces. Therefore using Corollary \ref{invlim}, we can conclude that for all $n\geq 2$, $\check{\pi}_n^{top}(X,x)\cong\pi_n^{qtop} (X,x)$ where $X$ is an n-Hawaiian like space.
\begin{corollary}
Let $\mathbf{p}:(X,x)\rightarrow (\mathbf{X},\mathbf{x})=((X_{\lambda},x_{\lambda}),p_{\lambda\lambda'},\Lambda)$ be an HPol$_*$-expansion of a pointed topological space $(X,x)$, then the following statements hold for all $k\in \Bbb{N}$:
(i) If the cardinal number of $\Lambda$ is $\aleph_0$ and  $\pi_k^{qtop}(X_{\lambda},x_{\lambda})$ is first countable (second countable) for every $\lambda\in \Lambda$, then so is $\check{\pi}_k^{top}(X,x)$.\\
(ii) If $\pi_k^{qtop}(X_{\lambda},x_{\lambda})$ is totally disconnected for every $\lambda\in \Lambda$, then so is $\check{\pi}_k^{top}(X,x)$.
\end{corollary}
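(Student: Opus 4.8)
The plan is to deduce both statements from the identification $\check{\pi}_k^{top}(X,x)\cong\displaystyle{\lim_{\leftarrow}\pi_k^{qtop}(X_{\lambda},x_{\lambda})}$ established in Theorem~\ref{Bra}, and to reduce everything to standard facts about inverse limits of topological spaces, since each $\pi_k^{qtop}(X_{\lambda},x_{\lambda})$ is discrete. So the inverse limit in question is an inverse limit of discrete spaces, indexed by the directed set $\Lambda$, and it sits as a subspace of the product $\prod_{\lambda\in\Lambda}\pi_k^{qtop}(X_{\lambda},x_{\lambda})$ with the subspace topology inherited from the product (Tychonoff) topology.

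For part~(i), first I would invoke the fact that a countable product of first-countable (resp.\ second-countable) spaces is first-countable (resp.\ second-countable): when $|\Lambda|=\aleph_0$ the product $\prod_{\lambda\in\Lambda}\pi_k^{qtop}(X_{\lambda},x_{\lambda})$ is a countable product, hence first-countable (second-countable) provided each factor is. Then, since first countability and second countability are hereditary (pass to arbitrary subspaces), the inverse limit $\displaystyle{\lim_{\leftarrow}\pi_k^{qtop}(X_{\lambda},x_{\lambda})}$, being a subspace of that product, inherits the property; by Theorem~\ref{Bra} so does $\check{\pi}_k^{top}(X,x)$. For part~(ii), I would use that a product of totally disconnected spaces is totally disconnected (the connected component of a point in a product is the product of the connected components of its coordinates, which are singletons), and that total disconnectedness is also hereditary; again the inverse limit is a subspace of such a product, so it is totally disconnected, and Theorem~\ref{Bra} transfers this to $\check{\pi}_k^{top}(X,x)$. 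One could alternatively note for (ii) that each $\pi_k^{qtop}(X_{\lambda},x_{\lambda})$ is discrete, hence totally disconnected automatically, making the hypothesis in (ii) vacuous in this setting; but I would state it in the generality written.

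I do not expect a genuine obstacle here: the only care needed is to be explicit that $\displaystyle{\lim_{\leftarrow}}$ carries the subspace topology from the product, so that the hereditary nature of the three properties (first countability, second countability, total disconnectedness) applies, and to make sure the countability hypothesis on $\Lambda$ is used precisely where the ``countable product'' closure theorem requires it. The mildest subtlety is simply citing the right closure results for products (e.g.\ from \cite{E}); everything else is routine.

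\begin{proof}
By Theorem \ref{Bra}, $\check{\pi}_k^{top}(X,x)\cong\displaystyle{\lim_{\leftarrow}\pi_k^{qtop}(X_{\lambda},x_{\lambda})}$ as topological groups, and the latter is, by definition, the subspace of $\prod_{\lambda\in \Lambda}\pi_k^{qtop}(X_{\lambda},x_{\lambda})$ consisting of the threads compatible under the bonding maps, equipped with the subspace topology from the product topology.

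(i) If $|\Lambda|=\aleph_0$ and each $\pi_k^{qtop}(X_{\lambda},x_{\lambda})$ is first countable (second countable), then $\prod_{\lambda\in \Lambda}\pi_k^{qtop}(X_{\lambda},x_{\lambda})$ is a countable product of first-countable (second-countable) spaces, hence first countable (second countable). Since first countability and second countability are hereditary, the subspace $\displaystyle{\lim_{\leftarrow}\pi_k^{qtop}(X_{\lambda},x_{\lambda})}$ is first countable (second countable), and therefore so is $\check{\pi}_k^{top}(X,x)$.

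(ii) If each $\pi_k^{qtop}(X_{\lambda},x_{\lambda})$ is totally disconnected, then so is the product $\prod_{\lambda\in \Lambda}\pi_k^{qtop}(X_{\lambda},x_{\lambda})$, because the connected component of a point in a product space is the product of the connected components of its coordinates, hence a singleton. Total disconnectedness is hereditary, so the subspace $\displaystyle{\lim_{\leftarrow}\pi_k^{qtop}(X_{\lambda},x_{\lambda})}$ is totally disconnected, and therefore so is $\check{\pi}_k^{top}(X,x)$.
\end{proof}
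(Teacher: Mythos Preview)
Your proof is correct and follows the same approach as the paper's own proof, which simply invokes the preservation of first/second countability and total disconnectedness under products and subspaces (via the identification of Theorem~\ref{Bra}). Your write-up is more detailed and explicit than the paper's one-line justification, and your remark that the hypothesis in (ii) is automatic (since each $\pi_k^{qtop}(X_\lambda,x_\lambda)$ is discrete) is a valid observation the paper does not make.
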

\begin{proof}
The results follow from the fact that the product and the subspace topologies preserve the properties of being first countable, second countable and totally disconnected of spaces.
\end{proof}


\section{Main results}
It is claimed by Biss \cite{B0} that $\pi_{1}^{qtop}(X,x)$ commutes with the product. But there is a gap in his proof and   hence this proposition is not true, in general. For example, consider the Hawaiian earring (HE). Fabel \cite{F2} proved that the product $q\times q:\Omega(HE)\times\Omega(HE)\longrightarrow\pi^{qtop}_1(HE)\times\pi^{qtop}_1(HE)$ is not a quotient map and hence the topology of $\pi_1^{qtop}(HE\times HE)$ is strictly finer than $\pi_1^{qtop}(HE)\times \pi_1^{qtop}(HE)$. Moreover, Fabel \cite{F3} showed that for each $n\geq 1$ there exists a compact, path connected, metric space $X$ such that multiplication is discontinuous in $\pi_n^{qtop}(X,x)$. Hence $\pi_n^{qtop}$ does not preserve finite products in general (see also \cite{Bra}). It is well-known that if  the cartesian product of two spaces $X$ and $Y$ admits an Hpol-expansion, which is the cartesian product of Hpol-expansions of these space, then $X\times Y$ is a product in the shape category \cite{M1}. In this case, we show that topological $k$th shape group commutes with finite products, for all $k\in \mathbb{N}$.
\begin{theorem}\label{pro}
If $X$ and $Y$ are shape path connected spaces with Hpol-expansions $\mathbf{p}: X\rightarrow \mathbf{X}$ and $\mathbf{q}:Y\rightarrow \mathbf{Y}$   such that $\mathbf{p}\times \mathbf{q}:X\times Y\rightarrow \mathbf{X}\times\mathbf{Y}$ is an Hpol-expansion. Then $\check{\pi}_{k}^{top}(X\times Y)\cong \check{\pi}_{k}^{top}(X)\times \check{\pi}_{k}^{top}(Y)$, for all $k\in \mathbb{N}$.
\end{theorem}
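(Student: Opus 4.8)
The plan is to push everything through the inverse-limit description of Theorem~\ref{Bra} and then use that inverse limits commute with finite products; the whole point is that the homotopy groups of the polyhedra occurring in the expansions are \emph{discrete}, so none of the Fabel-type pathologies that obstruct $\pi_k^{qtop}(-\times-)\cong\pi_k^{qtop}(-)\times\pi_k^{qtop}(-)$ in general can arise here. First I would fix basepoints $x\in X$, $y\in Y$, put $x_\lambda=p_\lambda(x)$, $y_\mu=q_\mu(y)$, and use shape path connectedness of $X$ and $Y$ (i) to make sense of the basepoint-free notation, since it guarantees the isomorphism type of $\check{\pi}_k^{top}$ is independent of the chosen point, and (ii) to replace the given HPol-expansions by pointed HPol$_*$-expansions $\mathbf{p}:(X,x)\rightarrow(\mathbf{X},\mathbf{x})$, $\mathbf{q}:(Y,y)\rightarrow(\mathbf{Y},\mathbf{y})$, so that $\mathbf{p}\times\mathbf{q}:(X\times Y,(x,y))\rightarrow(\mathbf{X}\times\mathbf{Y},(\mathbf{x},\mathbf{y}))$ is an HPol$_*$-expansion, where $\mathbf{X}\times\mathbf{Y}=(X_\lambda\times Y_\mu,\ p_{\lambda\lambda'}\times q_{\mu\mu'},\ \Lambda\times M)$ with the coordinatewise order on $\Lambda\times M$.

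Next I would apply Theorem~\ref{Bra} three times to get isomorphisms of topological groups $\check{\pi}_k^{top}(X\times Y)\cong\lim_{\leftarrow}\pi_k^{qtop}(X_\lambda\times Y_\mu)$ (limit over $\Lambda\times M$), $\check{\pi}_k^{top}(X)\cong\lim_{\leftarrow}\pi_k^{qtop}(X_\lambda)$, and $\check{\pi}_k^{top}(Y)\cong\lim_{\leftarrow}\pi_k^{qtop}(Y_\mu)$. For each $(\lambda,\mu)$ the space $X_\lambda\times Y_\mu$ is (homotopy equivalent to) a polyhedron, hence semilocally $k$-connected, so $\pi_k^{qtop}(X_\lambda\times Y_\mu)$ is discrete; the classical natural isomorphism $\pi_k(X_\lambda\times Y_\mu)\cong\pi_k(X_\lambda)\times\pi_k(Y_\mu)$ then yields an isomorphism of discrete topological groups $\pi_k^{qtop}(X_\lambda\times Y_\mu)\cong\pi_k^{qtop}(X_\lambda)\times\pi_k^{qtop}(Y_\mu)$. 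Naturality of this splitting makes the isomorphisms compatible with the bonding maps $p_{\lambda\lambda'}\times q_{\mu\mu'}$, so they assemble into an isomorphism of inverse systems of topological groups, and hence induce an isomorphism of inverse limits $\lim_{\leftarrow}\pi_k^{qtop}(X_\lambda\times Y_\mu)\cong\lim_{\leftarrow}\bigl(\pi_k^{qtop}(X_\lambda)\times\pi_k^{qtop}(Y_\mu)\bigr)$.

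Finally I would check the purely formal interchange $\lim_{\leftarrow,(\lambda,\mu)\in\Lambda\times M}\bigl(\pi_k^{qtop}(X_\lambda)\times\pi_k^{qtop}(Y_\mu)\bigr)\cong\bigl(\lim_{\leftarrow,\Lambda}\pi_k^{qtop}(X_\lambda)\bigr)\times\bigl(\lim_{\leftarrow,M}\pi_k^{qtop}(Y_\mu)\bigr)$ as topological groups: a thread $\bigl((a_{\lambda\mu},b_{\lambda\mu})\bigr)_{(\lambda,\mu)}$ over $\Lambda\times M$ has, by taking $\lambda'=\lambda$ and using directedness of $M$, a first coordinate depending only on $\lambda$, and symmetrically a second coordinate depending only on $\mu$, so it corresponds bijectively (and group-isomorphically) to a pair of threads over $\Lambda$ and over $M$; this bijection is a homeomorphism because both sides carry the initial topology with respect to the relevant coordinate projections. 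Chaining the displayed isomorphisms and re-applying Theorem~\ref{Bra} gives $\check{\pi}_k^{top}(X\times Y)\cong\check{\pi}_k^{top}(X)\times\check{\pi}_k^{top}(Y)$ for all $k\in\mathbb{N}$.

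There is no deep obstacle once Theorem~\ref{Bra} is available: the argument is essentially formal. The one place where the content genuinely sits is the discreteness of $\pi_k^{qtop}(X_\lambda\times Y_\mu)$ --- since $\pi_k^{qtop}$ fails to commute with products for general spaces, it is indispensable that $\mathbf{p}\times\mathbf{q}$ be an HPol-expansion, so that every term of the product system is a polyhedron; this is exactly what rules out the discontinuity phenomena of Fabel. A secondary technical point requiring care is the passage from the given (a priori unpointed) HPol-expansions to pointed HPol$_*$-expansions of the based spaces, which is where shape path connectedness is used.
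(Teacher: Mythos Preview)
Your argument is correct, but it follows a genuinely different route from the paper's own proof. The paper works directly with the topology on $Sh((S^k,*),(-))$ introduced in Section~\ref{S}: it uses the hypothesis that $\mathbf{p}\times\mathbf{q}$ is an HPol-expansion to conclude that $X\times Y$ is a product in the shape category, then builds the map $\phi:\check{\pi}_k^{top}(X\times Y)\to\check{\pi}_k^{top}(X)\times\check{\pi}_k^{top}(Y)$ from the projection shape morphisms, defines its inverse $\psi(F,G)=\lfloor F,G\rfloor$ via the shape-categorical product, and verifies continuity of $\psi$ by an explicit computation with the basic open sets $V_{\lambda\mu}^{\lfloor F,G\rfloor}$. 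Your argument instead invokes Theorem~\ref{Bra} to rewrite all three groups as inverse limits of discrete groups, and then reduces everything to the two formal facts that $\pi_k$ preserves finite products of polyhedra (as discrete groups) and that inverse limits commute with finite products.

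Each approach has its advantages. The paper's proof is self-contained within the $Sh(-,-)$-topology framework and produces the isomorphism explicitly, without appealing to Theorem~\ref{Bra}; it also makes clear that the isomorphism is the one induced by the shape projections. Your approach is shorter and more conceptual once Theorem~\ref{Bra} is available, and it pinpoints precisely why the hypothesis on $\mathbf{p}\times\mathbf{q}$ is needed: it guarantees that every term $X_\lambda\times Y_\mu$ of the product system is a polyhedron, so that $\pi_k^{qtop}(X_\lambda\times Y_\mu)$ is discrete and the Fabel-type failure of $\pi_k^{qtop}$ to preserve products cannot occur. One small remark: the passage from an HPol-expansion to an HPol$_*$-expansion by choosing basepoints is a standard fact (polyhedra are well-pointed) and does not itself require shape path connectedness; that hypothesis is used only to make the basepoint-free notation in the statement meaningful.
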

\begin{proof}
Let $\mathcal{S}(\pi_{X}):X\times Y\rightarrow X$ and $\mathcal{S}(\pi_{Y}):X\times Y\rightarrow Y$ be the induced shape morphisms of canonical projections and assume that $\phi_{X}:\check{\pi}_{k}^{top}(X\times Y)\rightarrow \check{\pi}_{k}^{top}(X)$ and $\phi_{Y}:\check{\pi}_{k}^{top}(X\times Y)\rightarrow \check{\pi}_{k}^{top}(Y)$ be the induced continuous homomorphisms by $\mathcal{S}(\pi_{X})$  and $\mathcal{S}(\pi_{Y})$, respectively. Then the induced homomorphism $\phi: \check{\pi}_{k}^{top}(X\times Y)\rightarrow \check{\pi}_{k}^{top}(X)\times \check{\pi}_{k}^{top}(Y)$ is continuous. Since $X\times Y$ is a product in Sh, we can define a homomorphism $\psi:\check{\pi}_{k}^{top}(X)\times \check{\pi}_{k}^{top}(Y)\rightarrow \check{\pi}_{k}^{top}(X\times Y)$ by $\psi(F,G)=\lfloor F,G\rfloor$ where $\lfloor F,G\rfloor:S^{k}\rightarrow X\times Y$ is a unique shape morphism with $\mathcal{S}(\pi_{X})(\lfloor F,G\rfloor)=F$ and $\mathcal{S}(\pi_{Y})(\lfloor F,G\rfloor)=G$. In fact, if $F=\langle \mathbf{f}=(f,f_{\lambda})\rangle$ and $G=\langle \mathbf{g}=(g,g_{\mu})\rangle$, then $\lfloor F,G\rfloor=\langle \lfloor \mathbf{f},\mathbf{g}\rfloor\rangle$, in which $\lfloor \mathbf{f},\mathbf{g}\rfloor$ is given by $\lfloor f,g\rfloor_{\lambda\mu}=f_{\lambda}\times g_{\mu}:S^{k}\rightarrow X_{\lambda}\times Y_{\mu}$. By the proof of \cite[Theorem 2.4]{Na}, the homomorphism $\psi$ is well define and moreover, $\phi o \psi=id$ and $\psi o \phi=id$.

To complete the proof, it is enough to show that $\psi$ is continuous. Let $\lfloor F,G\rfloor\in V_{\lambda\mu}^{\lfloor F,G\rfloor}$ be a basic element in the topology on $\check{\pi}_{k}^{top}(X\times Y)$. Considering open sets $F\in V_{\lambda}^{F}$ and $G\in V_{\mu}^{G}$, we show that $\psi(V_{\lambda}^{F}\times V_{\mu}^{G})\subseteq V_{\lambda\mu}^{\lfloor F,G\rfloor}$. Let $H\in V_{\lambda}^{F}$ and $K\in V_{\mu}^{G}$, then $p_{\lambda}oH=p_{\lambda}oF$ and $q_{\mu}oK=q_{\mu}oG$. If $H=\langle (h,h_{\lambda})\rangle$ and $K=\langle (k,k_{\mu})\rangle$, then $p_{\lambda}oH:S^{k}\rightarrow X_{\lambda}$ and $q_{\mu}oK:S^{k}\rightarrow Y_{\mu}$ are shape morphisms given by $p_{\lambda}oh_{\lambda}$ and $q_{\mu}ok_{\mu}$, respectively. Therefore $p_{\lambda}oH\times q_{\mu}oK:S^{k}\rightarrow X_{\lambda}\times Y{\mu}$ is a shape morphism given by $(p_{\lambda}oh_{\lambda})\times (q_{\mu}ok_{\mu})$. On the other hand, $p_{\lambda}\times q_{\mu}(\lfloor H,K\rfloor):S^{k}\rightarrow X_{\lambda}\times Y_{\mu}$ is a shape morphism given by $(p_{\lambda}\times q_{\mu})o(h_{\lambda}\times k_{\mu})=(p_{\lambda}oh_{\lambda})\times (q_{\mu}ok_{\mu})$. So we have $p_{\lambda}\times q_{\mu}(\lfloor H,K\rfloor)=p_{\lambda}oH\times q_{\mu}oK$. Hence
\begin{eqnarray}
p_{\lambda}\times q_{\mu}(\lfloor H,K\rfloor)&=&p_{\lambda}oH\times q_{\mu}oK\nonumber\\
&=&p_{\lambda}oF\times q_{\mu}oG\nonumber\\
&=&p_{\lambda}\times q_{\mu}(\lfloor F,G\rfloor)\nonumber
\end{eqnarray}
which follows that $\psi(H,K)=\lfloor H,K\rfloor\in V_{\lambda\mu}^{\lfloor F,G\rfloor}$.
\end{proof}

Recall that if $X\subseteq Y$, then $X$ is a {\it retract} of $Y$ if there exists a map $r:Y\rightarrow X$ such that $r(x)=x$ for all $x\in X$.
\begin{theorem}\label{ret}
Let $X\subseteq Y$, $r:Y\rightarrow X$ be a retraction and $j:X\rightarrow Y$ be the inclusion map. Then $j_{*}:\check{\pi}_{k}^{top}(X,x)\rightarrow \check{\pi}_{k}^{top}(Y,x)$ is a topological embedding.
\end{theorem}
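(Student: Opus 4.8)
The plan is to use the functoriality of $\check{\pi}_k^{top}$ (Remark \ref{re}(ii)) together with the fact that a retraction splits the inclusion. First I would observe that since $r\circ j = \mathrm{id}_X$, applying the shape functor gives $\mathcal{S}(r)\circ\mathcal{S}(j) = \mathrm{id}$ in $Sh_*$, and hence $r_*\circ j_* = \mathrm{id}$ on $\check{\pi}_k^{top}(X,x)$, where $j_*$ and $r_*$ are the induced continuous homomorphisms from Remark \ref{re}(ii). In particular $j_*$ is injective, and it is a continuous homomorphism of Hausdorff topological groups. So the only thing left to check is that $j_*$ is an embedding, i.e. that it is open onto its image, equivalently that $j_*^{-1}$ (defined on $j_*(\check{\pi}_k^{top}(X,x))$) is continuous.

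The key step is to produce the inverse as a restriction of a continuous map on the whole space: since $r_*\circ j_* = \mathrm{id}$, the map $r_*$ restricted to the subgroup $j_*(\check{\pi}_k^{top}(X,x))$ is precisely $j_*^{-1}$, and $r_*$ is continuous by Remark \ref{re}(ii). Hence $j_*^{-1} = r_*|_{j_*(\check{\pi}_k^{top}(X,x))}$ is continuous, so $j_*$ is a homeomorphism onto its image with the subspace topology. I would write this out by choosing HPol$_*$-expansions compatibly: take $\mathbf{p}:(Y,x)\rightarrow(\mathbf{Y},\mathbf{x})$ an HPol$_*$-expansion of $Y$; then $r$ induces a pro-morphism, and one checks at the level of basic open sets $V_\lambda^F$ that $j_*$ pulls back basic opens to basic opens and that $r_*$ pushes the relevant relations forward, which is the concrete form of the continuity statements already recorded in Remark \ref{re}(ii).

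The main (and essentially only) obstacle is making sure that ``$j_*$ is an embedding'' is not conflated with ``$j_*$ is a homeomorphism onto a closed subgroup'': the statement only claims a topological embedding, so I do not need $j_*(\check{\pi}_k^{top}(X,x))$ to be closed in $\check{\pi}_k^{top}(Y,x)$ — although in fact, since $\check{\pi}_k^{top}(Y,x)$ is Hausdorff (Remark \ref{re}(i)) and $j_*(\check{\pi}_k^{top}(X,x))$ is the image of a retraction $r_*$ on a Hausdorff group, it is automatically closed, which one could add as a remark. The genuinely routine parts — that $r_* \circ j_* = \mathrm{id}$, that both maps are continuous homomorphisms, and that a continuous injective homomorphism with a continuous left inverse is an embedding — are all immediate from the earlier results, so the proof is short.
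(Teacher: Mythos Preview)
Your proposal is correct and follows essentially the same approach as the paper: both use that $\mathcal{S}(r)\circ\mathcal{S}(j)=\mathrm{id}$, invoke Remark \ref{re}(ii) to get continuity of $j_*$ and $r_*$, and observe that $r_*$ restricted to $\mathrm{im}\,j_*$ is the continuous inverse of $j_*$. Your write-up is in fact more detailed than the paper's, which simply says ``it is routine to check that $j_*$ and $r_{*|G}$ are inverse of each other''; one minor imprecision in your closing remark is that the relevant retraction onto $j_*(\check{\pi}_k^{top}(X,x))$ is $j_*\circ r_*$ rather than $r_*$ itself, but the closedness conclusion you draw from the Hausdorff property is correct.
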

\begin{proof}
Suppose $\mathcal{S}(j):X\rightarrow Y$ and $\mathcal{S}(r):Y\rightarrow X$ are induced shape morphisms by $j$ and $r$, respectively. Then the induced homomorphisms $j_{*}:\check{\pi}_{k}^{top}(X,x)\rightarrow \check{\pi}_{k}^{top}(Y,x)$ and $r_{*}:\check{\pi}_{k}^{top}(Y,x)\rightarrow \check{\pi}_{k}^{top}(X,x)$ of shape morphisms $\mathcal{S}(j)$ and $\mathcal{S}(r)$ are continuous
by Remark \ref{re}. Let $G=im j_{*}$, it is routine to check that $j_{*}$ and $r_{*|G}$ are inverse of each other. Therefore $j_{*}$ is an embedding.
\end{proof}
Let $(X,x)$ be a  topological space and $\mathbf{p}:(X,x)\rightarrow (\mathbf{X},\mathbf{x})=((X_{\lambda},x_{\lambda}),p_{\lambda\lambda'},\Lambda)$ be an Hpol$_*$-expansion of $X$. We know ${p_{\lambda}}_{*}:\pi_{k}^{qtop}(X,x)\rightarrow \pi_{k}^{qtop}(X_{\lambda},x_{\lambda})$ is a continuous homomorphism, for all $\lambda\in \Lambda$. Then the induced homomorphism $\phi:\pi_{k}^{qtop}(X,x)\rightarrow \check{\pi}_{k}^{top}(X,x)$ is continuous, for all $k\in \mathbb{N}$.
\begin{corollary}\label{inj}
If $(X,x)$ is shape injective, then $\pi_{k}^{qtop}(X,x)$ is a Hausdorff topological group, for all $k\in \mathbb{N}$.
\end{corollary}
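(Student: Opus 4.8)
The plan is to deduce this from the continuity of the induced homomorphism $\phi:\pi_{k}^{qtop}(X,x)\rightarrow \check{\pi}_{k}^{top}(X,x)$ together with the hypothesis of shape injectivity. Recall that $(X,x)$ being \emph{shape injective} means precisely that the canonical homomorphism $\pi_{k}(X,x)\rightarrow \check{\pi}_{k}(X,x)$ is injective, i.e. the underlying set map of $\phi$ is injective. So $\phi$ is a continuous injective homomorphism of quasitopological groups whose target $\check{\pi}_{k}^{top}(X,x)$ is a Hausdorff topological group by Theorem \ref{quasi} and Remark \ref{re}(i).

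The key step is the following general observation: if $G$ is a quasitopological group and there exists a continuous injective homomorphism $\phi:G\rightarrow H$ into a Hausdorff space $H$, then $G$ is Hausdorff. Indeed, given distinct $a,b\in G$, the points $\phi(a)\neq \phi(b)$ can be separated by disjoint open sets $U,V$ in $H$, and then $\phi^{-1}(U),\phi^{-1}(V)$ are disjoint open neighbourhoods of $a$ and $b$. Hence $\pi_{k}^{qtop}(X,x)$ is Hausdorff. The subtlety, and the reason the statement is phrased as a corollary of what precedes it, is that being Hausdorff is exactly the missing ingredient: $\pi_{k}^{qtop}(X,x)$ is always a quasitopological group, so the only obstruction to it being a genuine topological group is... well, nothing automatic — a Hausdorff quasitopological group need not be a topological group in general. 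So I should be careful here.

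The main obstacle is therefore to justify why \emph{Hausdorff} upgrades $\pi_{k}^{qtop}(X,x)$ from a quasitopological group to a topological group in this setting. The resolution is that shape injectivity gives more than injectivity of the underlying map: the continuous injection $\phi$ lands in $\check{\pi}_{k}^{top}(X,x)$, which by Theorem \ref{Bra} is an inverse limit of the discrete groups $\pi_{k}^{qtop}(X_{\lambda},x_{\lambda})$, hence carries the initial topology with respect to the family $\{{p_{\lambda}}_{*}\}$. Composing, $\pi_{k}^{qtop}(X,x)$ with its quotient topology maps continuously and injectively into a topological group. I would then argue that the initial topology pulled back along $\phi$ makes $\pi_{k}^{qtop}(X,x)$ a topological group (inversion and multiplication are continuous because they are so in the target and $\phi$ is an injective homomorphism intertwining the operations), and that this pulled-back topology coincides with the quotient topology $\pi_{k}^{qtop}$ — or at least that the quotient topology is finer, which combined with continuity of $\phi$ and the topological-group structure on the pulled-back topology suffices, using that in a shape-injective space the two topologies agree. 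Concretely: since $\phi$ is continuous and injective and the target is a Hausdorff topological group, $\pi_{k}^{qtop}(X,x)$ embeds as a subgroup, hence inherits the structure of a Hausdorff topological group. I would present the argument in this last, cleanest form, flagging that the embedding claim is where one must invoke that shape injectivity forces the quotient topology on $\pi_k^{qtop}(X,x)$ to be exactly the subspace topology induced from $\check\pi_k^{top}(X,x)$.
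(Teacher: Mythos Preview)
Your first paragraph is exactly the paper's argument, stated more cleanly. The paper writes: since $\phi$ is injective, $\{\alpha\}=\phi^{-1}(\{\phi(\alpha)\})$, and since $\check{\pi}_k^{top}(X,x)$ is Hausdorff and $\phi$ is continuous, $\{\alpha\}$ is closed. That is literally all the paper proves. Your version, pulling back disjoint open sets rather than a closed singleton, is actually preferable, since it gives Hausdorff directly rather than only $T_1$.

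Everything from your second paragraph onward is you trying to establish something the paper itself does not prove. The paper's proof makes no attempt whatsoever to show that multiplication becomes jointly continuous; it establishes only the separation property and stops. So the phrase ``Hausdorff topological group'' in the statement should be read as ``the quasitopological group $\pi_k^{qtop}(X,x)$ is Hausdorff,'' and you should not read into it a claim that shape injectivity upgrades quasitopological to topological. Your attempt to supply such an argument has a genuine gap: shape injectivity says only that $\phi$ is injective on underlying sets, not that it is a topological embedding, and there is no reason the quotient topology on $\pi_k^{qtop}(X,x)$ should coincide with the subspace topology pulled back from $\check{\pi}_k^{top}(X,x)$. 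Indeed, if that were automatic, Fabel's examples of shape-injective spaces (such as the Hawaiian earring) with discontinuous multiplication in $\pi_1^{qtop}$ would be impossible. Drop the last two paragraphs; the first one is the complete proof.
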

\begin{proof}
Let $\alpha\in \pi_{k}^{qtop}(X,x)$. By hypothesis, $\phi$ is one to one, so $\lbrace \alpha\rbrace=\phi^{-1}(\lbrace \phi(\alpha)\rbrace)$. Since $\check{\pi}_{k}^{top}(X,x)$ is Hausdorff and $\phi$ is contiuous, we can conclude that  $\lbrace \alpha\rbrace$ is a closed subset of $\pi_{k}^{qtop}(X,x)$.
\end{proof}
\begin{theorem}\label{re2}
Let $(X,x)$ be a pointed topological space. Then for all $k\in \Bbb{N}$,\\
(i) If $(X,x)\in HPol_*$, then $\check{\pi}_k^{top}(X,x)$ is discrete.\\
(ii) If $\mathbf{p}:(X,x)\rightarrow (\mathbf{X},\mathbf{x})=((X_{\lambda},x_{\lambda}),p_{\lambda\lambda'},\Lambda)$ is an HPol$_*$-expansion of $(X,x)$ and $\check{\pi}_k^{top}(X,x)$ is discrete, then $\check{\pi}_k^{top}(X,x)\leq\check{\pi}_k^{top}(X_{\lambda},x_{\lambda})$, for some $\lambda\in \Lambda$.
\end{theorem}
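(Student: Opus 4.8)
The plan is to treat the two assertions separately. Part (i) is essentially a special case of Theorem~\ref{Bra}, while part (ii) amounts to extracting a single index $\lambda$ from the discreteness hypothesis and checking that the bonding shape morphism $\mathcal{S}(p_\lambda)$ induces a topological embedding at the level of $k$th shape groups.

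For (i): if $(X,x)\in HPol_*$, then $X$ has the homotopy type of a pointed polyhedron, so the rudimentary expansion $\mathbf{p}:(X,x)\to(X,x)$ --- the inverse system with the single term $(X,x)$ and identity bonding map --- is an $HPol_*$-expansion of $(X,x)$. Feeding this expansion into Theorem~\ref{Bra} yields $\check{\pi}_k^{top}(X,x)\cong\pi_k^{qtop}(X,x)$ as topological groups, and this group is discrete, which is exactly the fact about polyhedra already invoked in the proof of Theorem~\ref{Bra}. Equivalently, one can compute directly: for the rudimentary expansion the only basic open neighbourhood of a shape morphism $F$ is $V^F=\{G\mid p\circ F=p\circ G \text{ as pointed homotopy classes}\}$, and since $p$ is the identity and $Sh((S^k,*),(X,x))=[(S^k,*),(X,x)]$ for a space of polyhedral homotopy type, this set is $\{F\}$; hence all singletons are open.

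For (ii): suppose $\check{\pi}_k^{top}(X,x)$ is discrete, so that the singleton $\{e\}$, where $e$ is the trivial shape morphism, is open. Since $\{V^F_\lambda\mid F\in Sh((S^k,*),(X,x)),\ \lambda\in\Lambda\}$ is a \emph{basis} for the topology, any open set containing $e$ contains a basic open set $V^e_\lambda$ with $e\in V^e_\lambda$; choosing such a $\lambda\in\Lambda$ gives $V^e_\lambda\subseteq\{e\}$, hence $V^e_\lambda=\{e\}$. Now consider the continuous homomorphism $(p_\lambda)_*:\check{\pi}_k^{top}(X,x)\to\check{\pi}_k^{top}(X_\lambda,x_\lambda)$ induced by the shape morphism $\mathcal{S}(p_\lambda)$, continuity being Remark~\ref{re}(ii). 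By the definition of $V^e_\lambda$, a shape morphism $G$ lies in $\ker(p_\lambda)_*$ precisely when $p_\lambda\circ G$ is the trivial pointed homotopy class to $X_\lambda$, i.e. precisely when $G\in V^e_\lambda=\{e\}$; thus $(p_\lambda)_*$ is injective. Since $X_\lambda$ is a polyhedron, $\check{\pi}_k^{top}(X_\lambda,x_\lambda)=\pi_k^{qtop}(X_\lambda,x_\lambda)$ is discrete, so the image of $(p_\lambda)_*$ is discrete, and a continuous bijection onto a discrete space is a homeomorphism; therefore $(p_\lambda)_*$ is a topological embedding and $\check{\pi}_k^{top}(X,x)$ is (isomorphic to) a topological subgroup of $\check{\pi}_k^{top}(X_\lambda,x_\lambda)$.

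The only genuinely delicate step is the passage from ``$\{e\}$ open'' to ``$V^e_\lambda=\{e\}$ for some single $\lambda\in\Lambda$'': this uses that the sets $V^F_\lambda$ form a basis and not merely a subbasis for $T_{\mathbf p}$, together with the identification of $V^e_\lambda$ with the kernel of $(p_\lambda)_*$, so that its collapsing to $\{e\}$ forces injectivity. Everything else --- continuity of $(p_\lambda)_*$ from Remark~\ref{re}(ii), discreteness of the polyhedral target and of $\check{\pi}_k^{top}(X,x)$, and the elementary observation that a continuous bijection onto a discrete group is a homeomorphism --- is routine once Theorem~\ref{Bra} is in hand.
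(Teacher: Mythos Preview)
Your proof is correct and follows essentially the same route as the paper. For (i) the paper simply cites \cite[Corollary 1]{CM}, whereas you derive the discreteness from Theorem~\ref{Bra} via the rudimentary expansion; both are valid and yours is self-contained within the paper. For (ii) the argument is identical in substance: the paper writes $\{E_x\}$ as a union of basic sets $V_\lambda^{E_x}$, picks any $\lambda$ from that union, and observes $\ker (p_\lambda)_*\subseteq V_\lambda^{E_x}=\{E_x\}$; you phrase this more cleanly by directly choosing a single basic $V^e_\lambda\subseteq\{e\}$ and noting $\ker(p_\lambda)_*=V^e_\lambda$. Your additional remark that the resulting injection is a topological embedding (both sides being discrete) is a small but worthwhile completion that the paper's proof omits.
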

\begin{proof}
(i) This follows from \cite[Corollary 1]{CM}.\\
(ii) Since $\check{\pi}_k^{top}(X,x)$ is a discrete group. Hence $\{E_x\}$ is an open set of identity point of $\check{\pi}_k^{top}(X,x)$. Thus $\{E_x\}=\cup_{\lambda\in \Lambda_0} V_{\lambda}^F$, where $\Lambda_0\subseteq\Lambda$. Consider the induced homomorphism ${p_{\lambda}}_*:\check{\pi}_k^{top}(X,x)\rightarrow \check{\pi}_k^{top}(X_{\lambda},x_{\lambda})$ given by ${p_{\lambda}}_*(F)=p_{\lambda}\circ F$. Let $G\in ker {p_{\lambda}}_*$, i.e. $p_{\lambda}\circ G=E_{x_{\lambda}}=p_{\lambda}\circ E_x$. Thus $G\in V^{E_x}_{\lambda}\subseteq \cup_{\lambda\in \Lambda_0} V_{\lambda}^F=\{E_x\}$ and so $G=E_x$. Therefore ${p_{\lambda}}_*$ is injective, for all $\lambda\in \Lambda_0$ and $k\in\Bbb{N}$.
\end{proof}

Note that if $\mathbf{p}:(X,x)\rightarrow (\mathbf{X},\mathbf{x})=((X_{\lambda},x_{\lambda}),p_{\lambda\lambda'},\Lambda)$ is an HPol$_*$-expansion of a pointed topological space $(X,x)$, then for all $k\in \Bbb{N}$, $\check{\pi}_k^{top}(X,x)=\displaystyle{\lim_{\leftarrow}\pi_k^{qtop}(X_{\lambda},x_{\lambda})}$ by Theorem \ref{Bra}. Therefore  we can conclude that the $k$th homotopy shape group $\check{\pi}_k^{top}(X,x)$, $k\in \Bbb{N}$, is discrete if all but finitely many $X_{\lambda}$ are simply connected. Indeed, $\check{\pi}_k^{top}(X,x)=\displaystyle{\lim_{\leftarrow}\pi_k^{qtop}(X_{\lambda},x_{\lambda})}$ is embedded in $\prod_{\lambda\in \Lambda}\pi_k^{qtop}(X_{\lambda},x_{\lambda})$ and a product of infinitely many discrete spaces having more than one point is not discrete unless all but finitely many of these spaces are singleton. Therefore all but finitely many of $\pi_k^{qtop}(X_{\lambda},x_{\lambda})$ should be trivial.
\begin{example}\label{HE}
Let $(HE,p=(0,0))=\displaystyle{\lim_{\leftarrow}(X_i,p_i)}$ be the Hawaiian Earring where $X_j=\vee_{i=1}^jS^1_i$. The first shape group $\check{\pi}_1^{top}(HE,p)$ is not embedded in $\check{\pi}_1^{top}(X_j,p_j)$, for any $j\in I$. Therefore, by Theorem \ref{re2}, we can conclude that $\check{\pi}_1^{top}(HE,p)$ is not discrete.
\end{example}

In the follow, we intend to obtain a relationship between the shape homotopy groups and topological shape homotopy groups of a topological space $X$.
\begin{remark}
Let $F:(X,x)\rightarrow (Y,y)$ be a shape morphism represented by $\mathbf{f}:(\mathbf{X},\mathbf{x})=((X_{\lambda},x_{\lambda}),p_{\lambda\lambda'},\Lambda)\rightarrow (\mathbf{Y},\mathbf{y})=((Y_{\mu},y_{\mu}),q_{\mu\mu'},M)$. Let $k\in \Bbb{N}$. If the induced morphism $\pi_k(\mathbf{f}):\pi_k(\mathbf{X},\mathbf{x})\rightarrow \pi_k(\mathbf{Y},\mathbf{y})$ is an isomorphism in pro-group, then $\check{\pi}_k^{top}(F):\check{\pi}_k^{top}(X,x)\rightarrow \check{\pi}_k^{top}(Y,y)$ is an isomorphism of topological groups. Indeed using \cite[Theorem 1.1.3]{MS}, for the morphism $\mathbf{f}:\mathbf{X}\rightarrow \mathbf{Y}$ there exist inverse systems $\mathbf{X'}$ and $\mathbf{Y'}$ indexed over the same cofinite directed ordered set $N$ and a morphism $\mathbf{f'}:\mathbf{X'}\rightarrow \mathbf{Y'}$ represented by a level morphism $(1_\nu,f_\nu)$. The isomorphism $\pi_k(\mathbf{f})$ induces an isomorphism $\pi_k(\mathbf{f'})$. Since $X_{\lambda}$'s and $Y_{\mu}$'s are polyhedra, whose topological $k$th homotopy groups are discrete and therefore $\pi_k^{top}(\mathbf{f'})$ is an isomorphism in pro-$\mathcal{C}$, where $\mathcal{C}$ is the category of topological groups. Thus $\check{\pi}_k^{top}(F)$ is an isomorphism by \cite[Proposition 2.5.10]{E}.
\end{remark}

With the above remark it seems that the isomorphism $\check{\pi}_k(X,x)\cong \check{\pi}_k(Y,y)$ as groups implies the isomorphism $\check{\pi}_k^{top}(X,x)\cong \check{\pi}_k^{top}(Y,y)$
as topological groups. But in the following example we exhibit two topological spaces in which whose $k$th shape groups are isomorphic while whose topological $k$th shape groups are not isomorphic, for all $k\in\Bbb{N}$.
\begin{example}
Let $k\in \Bbb{N}$  and Let $\mathbf{X}=(X_n, p_{nn+1}, \Bbb{N})$, where $X_n=\prod_{j=1}^nS^k_j$ is the product of $n$ copies of $k$-sphere $S^k$, for all $n\in \Bbb{N}$ and the bonding morphisms of $\mathbf{X}$ are the projection maps. Put $X=\displaystyle{\lim_{\leftarrow} X_n}$. Since $X_n$'s are metric continua, the inverse limit $X$ is metric continuum by \cite[ Theorem 6.1.20 and Corollary 4.2.5]{E}. Also since bonding morphisms of pro-$\pi_k(X, *)$ are onto, pro-$\pi_k(X, *)$ has the Mittag-Leffler propertyand so $(X, *)$ is 1-movable by \cite[Theorem 2.8.4]{MS}. Thus the shape homotopy groups of $X$ independent of the base point by Theorem \ref{ad2}. Since $X_n$'s are compact Hausdorff, the mapping $P:X\rightarrow \mathbf{X}$ is an Hpol-expansion of $X$ by \cite[Remark 1]{FZ} and therefore $\check{\pi}_k(X)\cong \displaystyle{\lim_{\leftarrow}\pi_k(X_n)}\cong \prod \Bbb{Z}$. Let $Y$ be an Eilenberg-MacLane space $K(\prod \Bbb{Z}, k)$. Since $Y\in HPol$, $\check{\pi}_k(Y)=\pi_k(Y)=\prod \Bbb{Z}$ and this group independent of the base point. On the other hand since $Y\in HPol$, $\check{\pi}_k^{top}(Y)$ is discrete by Theorem \ref{re2} part (i) while $\check{\pi}_k^{top}(X)$ is not discrete. Note that a product of infinitely many discrete spaces having more than one point is not discrete unless all but finitely many of these spaces are singleton. Thus $\check{\pi}_k^{top}(X)$ and $\check{\pi}_k^{top}(Y)$ are not isomorphic and hence $X$ and $Y$ do not have the same shape type and therefore these spaces do not have the same homotopy type.
\end{example}

Note that the $k$th shape groups of spaces $X$ and $Y$ in the above example are isomorphic while whose topological $k$th shape groups are not isomorphic, for all $k\in\Bbb{N}$. Therefore these spaces show that the functor $\check{\pi}_k^{top}$ succeeds in distinguishing the homotopy type of $X$ and $Y$ while $\check{\pi}_k$ fails.

The following result seems interesting.
\begin{theorem}\label{com}
Let $\mathbf{p}:(X,x)\rightarrow ((X_n,x_n),p_{nn+1},\Bbb{N})$ be an HPol$_*$-expansion of $(X,x)$ such that $p_{nn+1}$ is onto for all $n\in \Bbb{N}$. If $\pi_k(X_1,x_1)$ is finite, then $\check{\pi}_k^{top}(X,x)$ is compact, for all $k\in \Bbb{N}$.
\end{theorem}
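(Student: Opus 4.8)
The plan is to pass, via Theorem~\ref{Bra}, from $\check\pi_k^{top}(X,x)$ to an inverse limit of discrete groups, and then reduce compactness to a finiteness statement about the $X_n$. Since $\mathbf{p}$ is an HPol$_*$-expansion indexed by $\Bbb{N}$, Theorem~\ref{Bra} gives $\check\pi_k^{top}(X,x)\cong\lim_{\leftarrow}\pi_k^{qtop}(X_n,x_n)$ as topological groups, and each $\pi_k^{qtop}(X_n,x_n)=\check\pi_k^{top}(X_n,x_n)$ is discrete because $X_n$ is a polyhedron. Being an inverse limit, $\lim_{\leftarrow}\pi_k^{qtop}(X_n,x_n)$ is a closed subgroup of the product $\prod_n\pi_k^{qtop}(X_n,x_n)$; so if every $\pi_k(X_n,x_n)$ is finite, this product is compact by Tychonoff's theorem and $\check\pi_k^{top}(X,x)$, being a closed subspace of it, is compact. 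Hence it suffices to prove that $\pi_k(X_n,x_n)$ is finite for \emph{every} $n$, not merely for $n=1$.

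The role of the hypothesis that each $p_{nn+1}$ is onto is to propagate the finiteness of $\pi_k(X_1,x_1)$ through the tower. Since the composite bonding map $X_n\to X_1$ is again onto, the natural target is to show that the induced homomorphism $\pi_k(X_n,x_n)\to\pi_k(X_1,x_1)$ is injective; then $\pi_k(X_n,x_n)$ embeds in the finite group $\pi_k(X_1,x_1)$ and is finite, and the reduction above makes $\check\pi_k^{top}(X,x)$ a closed subgroup of a product of finite discrete groups, hence a compact (and, by Remark~\ref{re}(i), Hausdorff) topological group.

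The main obstacle is precisely this injectivity: surjectivity of the maps $p_{nn+1}$ on the level of spaces does not by itself control the induced maps on $\pi_k$, so the argument at this point must exploit more of the structure of the expansion (for instance the homotopy behaviour of the bonding maps, or a movability/ANR type property) rather than set-theoretic surjectivity alone. An alternative that avoids handling each $\pi_k(X_n,x_n)$ separately is to observe that $\check\pi_k^{top}(X,x)=\lim_{\leftarrow}\pi_k^{qtop}(X_m,x_m)$ is compact if and only if its image in each $\pi_k^{qtop}(X_n,x_n)$ is finite (the image of a compact space in a discrete space is finite; conversely the inverse limit sits as a closed subspace of the product of these finite images, hence is compact). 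Here the case $n=1$ is immediate from the hypothesis, and one would then try to bound the fibres of the induced surjection from the image of $\check\pi_k^{top}(X,x)$ in $\pi_k^{qtop}(X_n,x_n)$ onto its image in $\pi_k^{qtop}(X_1,x_1)$, using the surjectivity of the bonding maps. In both formulations the finiteness of $\pi_k(X_1,x_1)$ supplies the base case, and the onto hypothesis is what must carry it all the way up the sequence.
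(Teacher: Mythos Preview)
Your reduction via Theorem~\ref{Bra} to a closed subgroup of $\prod_n\pi_k(X_n,x_n)$ is correct, but the argument stalls exactly where you say it does: surjectivity of $p_{nn+1}$ on points gives neither injectivity of the induced map on $\pi_k$ nor any bound on the images $I_n$ you describe, so the finiteness needed for Tychonoff is simply not available from the hypotheses. This is a genuine gap, and nothing in your proposal closes it.

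The paper's route is different and never attempts to show that each $\pi_k(X_n,x_n)$ is finite. It works directly with the basis $\{V_n^F\}$ for the topology on $\check\pi_k^{top}(X,x)$: given a cover by basic open sets, each $V_n^F$ is enlarged to $V_1^F$, and the assignment $V_1^F\mapsto[f_1]$ is a well-defined injection of the family $\mathcal{A}=\{V_1^F:F\in\mathcal{F}\}$ into the finite group $\pi_k(X_1,x_1)$, whence $\mathcal{A}$ is finite. The onto hypothesis on the $p_{nn+1}$ is invoked only at the end, to show this assignment is also surjective: given any $[g_1]\in\pi_k(X_1,x_1)$, the paper lifts $g_1$ successively through the tower by choosing $g_{n+1}:(S^k,*)\to(X_{n+1},x_{n+1})$ with $p_{nn+1}g_{n+1}=g_n$, and assembles these into a shape morphism $G$. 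You should note, however, that this lifting step rests on precisely the principle you flagged as unsupported---surjectivity of a map of spaces does not in general allow $k$-loops to be lifted (for instance the double cover $S^2\to\Bbb{R}P^2$ is onto, yet the generator of $\pi_1(\Bbb{R}P^2)$ does not lift)---so your concern about what ``onto'' actually buys applies to the paper's argument as well as to your own.
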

\begin{proof}
Let $\{V_n^F |\ \  F\in \mathcal{F}\subseteq Sh((S^k,*),(X,x))\ \ \mbox{and}\ \  n\in \Bbb{N}\}$ be an open cover of $Sh((S^k,*),(X,x))$. Since $V_n^F\subseteq V_1^F$ for all $n\in \Bbb{N}$, we have
\begin{equation}\label{compact}
Sh((S^k,*),(X,x))\subseteq \cup _{F\in \mathcal{F}} V_1^F.
\end{equation}
Put $\mathcal{A}=\{ V_1^F|\ \ F\in \mathcal{F}\}$. It is sufficient to show that $\mathcal{A}$ is finite. For this, we define a bijection map $\phi: \mathcal{A}\rightarrow \pi_k(X_1,x_1)$. Let $V_1^F\in \mathcal{A}$, where $F$ is represented by $\mathbf{f}:(S^k,*)\rightarrow (\mathbf{X},\mathbf{x})$ consists of $f_n:(S^k,*)\rightarrow (X_n,x_n)$, $n\in \Bbb{N}$. We define $\phi(V_1^F)=[f_1]$. Let $F,G\in \mathcal{F}$. Then $V_1^F=V_1^G$ if and only if $p_1\circ F=p_1\circ G$ as pointed homotopy classes to $X_1$, by definition of $V_1^F$. $p_1\circ F=p_1\circ G$ as pointed homotopy classes to $X_1$ if and only if $f_1\simeq g_1$ rel $\{*\}$, i.e. $[f_1]=[g_1]$. Hence $\phi$ is well-defined and injective. Also $\phi$ is onto. Indeed, let $[g_1]\in \pi_k(X_1,x_1)$. Since $p_{12}$ is onto, we can define the map $g_2:(S^k,*)\rightarrow (X_2,x_2)$ with $p_{12}g_2=g_1$. Inductively we can introduce maps $g_n:(S^k,*)\rightarrow (X_n,x_n)$ such that $p_{nn+1}g_{n+1}=g_n$ for all $n\in \Bbb{N}$ which induce the map $\mathbf{g}:(S^k,*)\rightarrow (\mathbf{X},\mathbf{x})$ of pro-HTop$_*$. Now, $G=<\mathbf{g}>$ is a shape morphism in $Sh((S^k,*),(X,x))$. By (\ref{compact}) there is an $F\in \mathcal{F}$ such that $G\in V_1^F$. Thus $p_1\circ F=p_1\circ G$ as pointed homotopy classes to $X_1$ or equivalently $[f_1]=[g_1]$. Hence $\phi(V_1^F)=[g_1]$ and so $\phi$ is onto.
\end{proof}
A similar result of the above theorem is in \cite{CM2} as follows:\\
{\it Let $(Y,y_0)$ be a pointed metric compact space such that\\
\[(Y,y_0)=\displaystyle{\lim_{\leftarrow}\{(P_n,\{p_0\}_n),\phi_n\}},\]
where $(P_n,\{p_0\}_n)$ are pointed polyhedra with finite $m$-homotopy group (for all $n\in \Bbb{N}$). Then the $m$th-shape group $\check{\pi}_m(X,x)$ is compact \cite[Corollary 5.4]{CM2}}.


Let $\mathbf{p}:(X,x)\rightarrow (\mathbf{X},\mathbf{x})=((X_{\lambda},x_{\lambda}),p_{\lambda\lambda'},\Lambda)$ be an HPol$_*$-expansion of a pointed topological space $(X,x)$, then in viewpoint of $\check{\pi}_k^{inv}(X,x)=\displaystyle{\lim_{\leftarrow}\pi_k^{qtop}(X_{\lambda},x_{\lambda})}$ \cite{Br} which is equivalent to $\check{\pi}_k^{top}(X,x)$ for all $k\in \Bbb{N}$ by Theorem \ref{Bra}, the $k$th  topological shape group $\check{\pi}_k^{top}(X,x)$ is compact if $\pi_k^{qtop}(X_{\lambda},x_{\lambda})$ is finite, for all $\lambda\in \Lambda$.

Recall that an inverse system $\mathbf{X}=(X_{\lambda},p_{\lambda\lambda'},\Lambda)$ of pro-HTop is said to be movable if every $\lambda\in \Lambda$ admits a $\lambda'\geq \lambda$ such that each
$\lambda''\geq \lambda$ admits a morphism $r:X_{\lambda'}\rightarrow X_{\lambda''}$ of HTop with $p_{\lambda\lambda''}\circ r\simeq p_{\lambda\lambda'}$. We say that a topological space $X$ is movable  provided it admits an HPol-expansion $\mathbf{p}:X\rightarrow \mathbf{X}$ such that $\mathbf{X}$ is a movable inverse system of pro-HPol \cite{MS}.

In the following, we show that movability can be transferred from a pointed topological spaces $(X,x)$ to $\check{\pi}_k^{top}(X,x)$ under some condition. First we need the following results.
\begin{lemma}\label{mov}
If $(\mathbf{X},\mathbf{x})=((X_{\lambda},x_{\lambda}),p_{\lambda\lambda'},\Lambda)$ is a movable (uniformly movable) inverse system, then  $\mathbf{Sh}((S^k,*),(X,x))= (Sh((S^k,*),(X_{\lambda},x_{\lambda})), (p_{\lambda\lambda'})_*,\Lambda)$  is also a movable (uniformly movable) inverse system, for all $k\in \Bbb{N}$.
\end{lemma}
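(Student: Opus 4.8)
The plan is to unwind both the definition of movability for inverse systems and the description of $Sh((S^k,*),(X_\lambda,x_\lambda))$ as the $k$th shape group $\check\pi_k(X_\lambda,x_\lambda)$ (here $\check\pi_k(X_\lambda,x_\lambda)=\pi_k(X_\lambda,x_\lambda)$ since $X_\lambda\in HPol_*$), and then simply transport the movability data from $(\mathbf{X},\mathbf{x})$ through the covariant functor $\pi_k(-,\ast)$. First I would recall that the bonding morphisms of the system $\mathbf{Sh}((S^k,*),(X,x))$ are exactly the induced homomorphisms $(p_{\lambda\lambda'})_* : \pi_k(X_{\lambda'},x_{\lambda'})\to\pi_k(X_\lambda,x_\lambda)$, so the system in question is $(\pi_k(X_\lambda,x_\lambda),(p_{\lambda\lambda'})_*,\Lambda)$ over the same index set $\Lambda$.

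Next, fix $\lambda\in\Lambda$. By movability of $(\mathbf{X},\mathbf{x})$ there is a $\lambda'\geq\lambda$ such that every $\lambda''\geq\lambda$ admits a map $r=r_{\lambda''}:X_{\lambda'}\to X_{\lambda''}$ in $HTop_*$ with $p_{\lambda\lambda''}\circ r\simeq p_{\lambda\lambda'}$ rel $\{x_{\lambda'}\}$. I claim the same $\lambda'$ witnesses movability of the $\pi_k$-system: given $\lambda''\geq\lambda$, apply the functor $\pi_k(-,\ast)$ to $r_{\lambda''}$ to obtain a homomorphism $(r_{\lambda''})_* : \pi_k(X_{\lambda'},x_{\lambda'})\to\pi_k(X_{\lambda''},x_{\lambda''})$. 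Functoriality of $\pi_k$ together with homotopy invariance gives
$$
(p_{\lambda\lambda''})_*\circ (r_{\lambda''})_* = (p_{\lambda\lambda''}\circ r_{\lambda''})_* = (p_{\lambda\lambda'})_*,
$$
which is precisely the defining condition for movability of the system $(\pi_k(X_\lambda,x_\lambda),(p_{\lambda\lambda'})_*,\Lambda)$ at the index $\lambda$. Since $\lambda$ was arbitrary, the $\pi_k$-system is movable.

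For the uniformly movable case the argument is the same but one must be careful about the order of quantifiers: uniform movability of $(\mathbf{X},\mathbf{x})$ provides, for each $\lambda$, a single $\lambda'\geq\lambda$ and a single morphism $\mathbf{r}:X_{\lambda'}\to\mathbf{X}$ of the pro-category (equivalently a coherent family $r_{\lambda''}$) such that $p_{\lambda\lambda''}\circ r_{\lambda''}\simeq p_{\lambda\lambda'}$ for all $\lambda''\geq\lambda$ simultaneously, and one again applies $\pi_k(-,\ast)$ to this single morphism to get the required morphism $\mathbf{Sh}((S^k,*),(X_{\lambda'},x_{\lambda'}))\to\mathbf{Sh}((S^k,*),(X,x))$ in pro-(groups). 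The main point to check — and the only place any real care is needed — is that applying $\pi_k$ preserves the commutativity ``up to homotopy'' of the triangles, turning homotopy-commutative diagrams in $HTop_*$ into genuinely commutative diagrams of groups; this is immediate from homotopy invariance of $\pi_k$, so there is no serious obstacle here. I would therefore expect this lemma to be essentially a formal consequence of functoriality, with the proof amounting to little more than writing down the displayed identity above.
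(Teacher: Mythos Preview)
Your proposal is correct and follows essentially the same route as the paper's proof: both take the movability data $r:(X_{\lambda'},x_{\lambda'})\to(X_{\lambda''},x_{\lambda''})$ with $p_{\lambda\lambda''}\circ r\simeq p_{\lambda\lambda'}$, push it through the functor $Sh((S^k,*),-)$ (equivalently $\pi_k$, since the $X_\lambda$ are polyhedra), and read off $(p_{\lambda\lambda''})_*\circ r_*=(p_{\lambda\lambda'})_*$ from functoriality and homotopy invariance. Your treatment is in fact a bit more careful than the paper's, which leaves the uniformly movable case implicit and writes $\simeq$ where your $=$ is the correct relation between the induced group homomorphisms.
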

\begin{proof}
Let $\lambda\in \Lambda$. Since $(\mathbf{X},\mathbf{x})$ is a movable inverse system, there is a $\lambda'\geq\lambda$ such that for every $\lambda''\geq\lambda$ there is a map $r:(X_{\lambda'},x_{\lambda'})\rightarrow (X_{\lambda''}, x_{\lambda''})$ such that $p_{\lambda\lambda''}\circ r \simeq p_{\lambda\lambda'}$ rel$\{x_{\lambda'}\}$. We consider $r_*: Sh((S^k,*),(X_{\lambda'},x_{\lambda'}))\rightarrow Sh((S^k,*),(X_{\lambda''},x_{\lambda''}))$. Hence $(p_{\lambda\lambda''})_*\circ r_* \simeq (p_{\lambda\lambda'})_*$ and so $\mathbf{Sh}((S^k,*),(X,x))$ is movable.
\end{proof}
\begin{remark}\label{movre}
Let $(X,x)$ be a movable space. By definition, there exists an HPol$_*$-expansion $\mathbf{p}:(X,x)\rightarrow (\mathbf{X},\mathbf{x})$ such that $(\mathbf{X},\mathbf{x})$ is a movable inverse system. Now, if $\mathbf{p_*}:Sh((S^k,*),(X,x))\rightarrow \mathbf{Sh}((S^k,*),(X,x))$ is an HPol$_*$-expansion, then using Lemma \ref{mov}, we can conclude that $\check{\pi}_k^{top}(X,x)$ is a movable topological group for all $k\in \Bbb{N}$.
\end{remark}
\begin{remark}
By \cite[Theorem 2]{CM} if $\mathbf{p}:(X,x)\rightarrow (\mathbf{X},\mathbf{x})$ is an HPol$_*$-expansion of $X$, then $\mathbf{p_*}:Sh((S^k,*),(X,x))\rightarrow \mathbf{Sh}((S^k,*),(X,x))$ is an inverse limit of $\mathbf{Sh}((S^k,*),(X,x))= (Sh((S^k,*),(X_{\lambda},x_{\lambda})), (p_{\lambda\lambda'})_*,\Lambda)$. Now, if $Sh((S^k,*),(X_{\lambda},x_{\lambda}))$ is a compact polyhedron for all $\lambda\in \Lambda$, then by \cite[Remark 1]{FZ} $\mathbf{p_*}$ is an HPol$_*$-expansion of $Sh((S^k,*),(X,x))$.
\end{remark}

Now, we intend to find some conditions on a topological space $X$ under which the map $\mathbf{p_*}$ is an HPol$_*$-expansion.
\begin{lemma}\label{explem}
Let $(X,x)$ have an HPol$_*$-expansion $\mathbf{p}:(X,x)\rightarrow ((X_{\lambda},x_{\lambda}),p_{\lambda\lambda'},\Lambda)$ such that $\pi_k(X_{\lambda},x_{\lambda})$ is finite, for every $\lambda\in \Lambda$. Then $\mathbf{p_*}:Sh((S^k,*),(X,x))\rightarrow \mathbf{Sh}((S^k,*),(X,x))$ is an HPol$_*$-expansion of $Sh((S^k,*),(X,x))$, for all $k\in \Bbb{N}$.
\end{lemma}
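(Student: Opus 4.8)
The plan is to invoke the Remark immediately preceding this Lemma, which already reduces the assertion to a single point: \emph{if} every object $Sh((S^k,*),(X_{\lambda},x_{\lambda}))$ of the inverse system $\mathbf{Sh}((S^k,*),(X,x))=(Sh((S^k,*),(X_{\lambda},x_{\lambda})),(p_{\lambda\lambda'})_*,\Lambda)$ is a compact polyhedron, then (since $\mathbf{p_*}$ is an inverse limit of this system by \cite[Theorem 2]{CM}) the map $\mathbf{p_*}$ is an HPol$_*$-expansion of $Sh((S^k,*),(X,x))$ by \cite[Remark 1]{FZ}. So, for a fixed $k\in\mathbb{N}$, the entire proof amounts to computing $Sh((S^k,*),(X_{\lambda},x_{\lambda}))$, as a topological group, for each $\lambda$.

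First I would use that $X_{\lambda}$ is a polyhedron, so it admits the rudimentary HPol$_*$-expansion $\mathrm{id}\colon(X_{\lambda},x_{\lambda})\to(X_{\lambda},x_{\lambda})$. By Moszynska's identification \cite{M} this gives $Sh((S^k,*),(X_{\lambda},x_{\lambda}))=\check{\pi}_k(X_{\lambda},x_{\lambda})=\pi_k(X_{\lambda},x_{\lambda})$ as a group. For the topology, observe that with the rudimentary expansion the basic neighbourhoods are $V^{F}=\{G\mid \mathrm{id}\circ F=\mathrm{id}\circ G\}=\{F\}$, so $T_{\mathbf{p}}$ is discrete; equivalently $\check{\pi}_k^{top}(X_{\lambda},x_{\lambda})=\pi_k^{qtop}(X_{\lambda},x_{\lambda})$ is discrete, as used in the proof of Theorem \ref{Bra} and as follows from \cite[Corollary 1]{CM}. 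Hence $Sh((S^k,*),(X_{\lambda},x_{\lambda}))$ is the discrete pointed space on the set $\pi_k(X_{\lambda},x_{\lambda})$, which is finite by hypothesis. A finite discrete pointed space is a compact $0$-dimensional polyhedron, so $Sh((S^k,*),(X_{\lambda},x_{\lambda}))\in HPol_*$ and is compact.

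It then remains to note that $\mathbf{Sh}((S^k,*),(X,x))$ is genuinely an inverse system in pro-HPol$_*$: its bonding maps $(p_{\lambda\lambda'})_*$ are homomorphisms between finite discrete groups, hence continuous, hence maps in HPol$_*$. With every object a compact polyhedron, the preceding Remark (that is, \cite[Theorem 2]{CM} together with \cite[Remark 1]{FZ}) applies verbatim and yields that $\mathbf{p_*}\colon Sh((S^k,*),(X,x))\to\mathbf{Sh}((S^k,*),(X,x))$ is an HPol$_*$-expansion. The only delicate step is the identification of the Cuchillo-Ibanez topology on $Sh((S^k,*),(X_{\lambda},x_{\lambda}))$ with the discrete one, and this is immediate from the rudimentary expansion (or from \cite[Corollary 1]{CM}); everything else is a direct appeal to the cited inverse-limit facts, so no real obstacle remains.
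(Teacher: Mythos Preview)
Your proposal is correct and follows essentially the same approach as the paper's own proof: both identify $Sh((S^k,*),(X_{\lambda},x_{\lambda}))$ with the finite discrete group $\pi_k(X_{\lambda},x_{\lambda})$ (using that $X_\lambda$ is a polyhedron, so $\check{\pi}_k^{top}(X_\lambda,x_\lambda)=\pi_k^{qtop}(X_\lambda,x_\lambda)$ is discrete), observe that a finite discrete space is a compact polyhedron, and then invoke \cite[Theorem 2]{CM} together with \cite[Remark 1]{FZ} to conclude that the inverse limit $\mathbf{p_*}$ is an HPol$_*$-expansion. Your version is slightly more explicit about the discreteness (via the rudimentary expansion) and about the bonding maps lying in HPol$_*$, but the argument is the same.
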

\begin{proof}
Since $\mathbf{p}:(X,x)\rightarrow ((X_{\lambda},x_{\lambda}),p_{\lambda\lambda'},\Lambda)$ is an HPol$_*$-expansion, $\mathbf{p_*}:Sh((S^k,*),(X,x))\rightarrow \mathbf{Sh}((S^k,*),(X,x))$ is an inverse limit of $\mathbf{Sh}((S^k,*),(X,x))$ by \cite[Theorem 2]{CM}. Since $X_{\lambda}$'s are polyhedra, $\check{\pi}_k(X_{\lambda},x_{\lambda})=\pi_k(X_{\lambda},x_{\lambda})$ which is finite for all $\lambda\in\Lambda$ by the hypothesis. On the other hand $\check{\pi}_k^{top}(X_{\lambda},x_{\lambda})=\pi_k^{qtop}(X_{\lambda},x_{\lambda})$ which is discrete for all $\lambda\in \Lambda$. Hence $Sh((S^k,*),(X_{\lambda},x_{\lambda}))=\check{\pi}_k^{qtop}(X_{\lambda},x_{\lambda})$ is a compact polyhedron. Therefore $\mathbf{p_*}$ is an HPol$_*$-expansion of $Sh((S^k,*),(X,x))$ by \cite[Remark 1]{FZ}.
\end{proof}
\begin{example}(\cite{MS})
Let $\Bbb{R}P^2$ be the real projective plane. Consider the map $\bar{f}:\Bbb{R}P^2\rightarrow \Bbb{R}P^2$ induced by the following commutative diagram:
\begin{equation}
\label{dia2}\begin{CD}
D^2@<<f< D^2\\
@VV \phi V@V \phi VV\\
\Bbb{R}P^2@<<\bar{f}<\Bbb{R}P^2,
\end{CD}
\end{equation}
 where $D^2=\{z\in \Bbb{C}\ \ | \ \ |z|\leq 1\}$ is the unit 2-cell, $f(z)=z^3$ and $\phi:D^2\rightarrow \Bbb{R}P^2$ is the quotient map which identify pairs of points $\{z,-z\}$ of $S^1$. \\
 We consider $X$ as the inverse sequence $$\Bbb{R}P^2\stackrel{\bar{f}}{\longleftarrow}\Bbb{R}P^2\stackrel{\bar{f}}{\longleftarrow}\cdots.$$
 Since $\Bbb{R}P^2 $ is a compact polyhedron, $\mathbf{p}:X\rightarrow (\Bbb{R}P^2, \bar{f}, \Bbb{N})$ is an HPol-expansion  of $X$, by \cite[Remark 1]{FZ}. Since $\pi_k(\Bbb{R}P^2 )\cong \Bbb{Z}_2$ is finite, $\mathbf{p_*}:Sh((S^k,*),(X,x))\rightarrow \mathbf{Sh}((S^k,*),(X,x))$ is an HPol$_*$-expansion of $Sh((S^k,*),(X,x))$, for all $k\in \Bbb{N}$ by Lemma \ref{explem}.
\end{example}
The following corollary is a consequence of Remark \ref{movre} and Lemma \ref{explem}.
\begin{corollary}\label{12}
Let movable space $(X,x)$ have an HPol$_*$-expansion $\mathbf{p}:(X,x)\rightarrow ((X_{\lambda},x_{\lambda}),p_{\lambda\lambda'},\Lambda)$ such that $\pi_k(X_{\lambda},x_{\lambda})$ is finite, for every $\lambda\in \Lambda$. Then $\check{\pi}_k^{top}(X,x)$ is a movable topological group for all $k\in \Bbb{N}$.
\end{corollary}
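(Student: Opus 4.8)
The plan is to combine the two earlier results that the corollary explicitly invokes, namely Remark~\ref{movre} and Lemma~\ref{explem}, and simply verify that their hypotheses are met simultaneously. First I would note that since $(X,x)$ is a movable space, by definition it admits an HPol$_*$-expansion $\mathbf{p}:(X,x)\rightarrow (\mathbf{X},\mathbf{x})=((X_{\lambda},x_{\lambda}),p_{\lambda\lambda'},\Lambda)$ with $(\mathbf{X},\mathbf{x})$ a movable inverse system; I would take this to be the very expansion appearing in the statement (or, if one prefers the expansion to be the one satisfying the finiteness condition, one uses the fact that movability is independent of the chosen HPol$_*$-expansion, so the given expansion is itself movable).

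Next I would apply Lemma~\ref{explem}: because $\pi_k(X_{\lambda},x_{\lambda})$ is finite for every $\lambda\in\Lambda$, the map $\mathbf{p_*}:Sh((S^k,*),(X,x))\rightarrow \mathbf{Sh}((S^k,*),(X,x))$ is an HPol$_*$-expansion of $Sh((S^k,*),(X,x))$, for all $k\in\Bbb{N}$. This is precisely the extra hypothesis that Remark~\ref{movre} needs. Meanwhile, Lemma~\ref{mov} tells us that $\mathbf{Sh}((S^k,*),(X,x))=(Sh((S^k,*),(X_{\lambda},x_{\lambda})),(p_{\lambda\lambda'})_*,\Lambda)$ is a movable inverse system, since $(\mathbf{X},\mathbf{x})$ is. So now both ingredients of Remark~\ref{movre} are in force: $\mathbf{p_*}$ is an HPol$_*$-expansion, and the target inverse system is movable.

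Finally, invoking Remark~\ref{movre} directly, we conclude that $\check{\pi}_k^{top}(X,x)$ is a movable topological group for all $k\in\Bbb{N}$, which is the assertion. The proof is essentially a bookkeeping argument, so I do not anticipate a genuine obstacle; the only point that requires a moment's care is ensuring that the single expansion $\mathbf{p}$ can be chosen to be \emph{both} movable \emph{and} to have polyhedral terms with finite $k$th homotopy groups. Since the statement already supplies an expansion with the finiteness property and the ambient space is assumed movable, one appeals to the standard fact (coming from the uniqueness of HPol$_*$-expansions up to isomorphism in pro-HTop$_*$ together with the invariance of movability under such isomorphisms, see \cite{MS}) that this given expansion is automatically movable, so no compatibility issue actually arises.
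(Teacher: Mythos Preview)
Your proposal is correct and follows exactly the approach the paper intends: the paper's proof consists of the single sentence that the corollary is a consequence of Remark~\ref{movre} and Lemma~\ref{explem}, and you have simply unpacked this by verifying their hypotheses and noting (via the standard invariance of movability under change of HPol$_*$-expansion, \cite{MS}) that the given expansion may be taken to be movable. There is nothing to add.
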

Note that with the assumptions of Corollary \ref{12} uniformly movability of a space can be transferred to its topological shape group.\\

Suppose that $(X,x)$ is a pointed topological space. We know that $\pi_{k}(X,x)$ has the quotient topology induced by the natural quotient map $q:\Omega^{k}(X,x)\rightarrow \pi_{k}(X,x)$, where $\Omega^{k}(X,x)$ is the $k$th loop space of $(X,x)$ with the compact-open topology. Now we intend to show that $\check{\pi}_{k}^{top}(X,x)$ can be considered as a quotient space of the shape loop space $\check{\Omega}_{k}(X,x)$ which is defined as follows:

Let $\mathbf{p}:(X,x)\rightarrow ((X_{\lambda},x_{\lambda}),p_{\lambda\lambda'},\Lambda)$ be an  HPol$_*$-expansion of $(X,x)$. We define a $k$-{\it shape loop} in $X$ as an element $(a_{\lambda})\in \prod_{\lambda\in \Lambda} \Omega^{k}(X_{\lambda},x_{\lambda})$ such that $p_{\lambda\lambda^{\prime}}(a_{\lambda^{\prime}})\simeq a_{\lambda}$, for all $\lambda^{\prime}\geq\lambda$. The set of all $k$-shape loops in $X$ is called the   {\it $k$th shape loop space} and it is denoted by $\check{\Omega}_{k}(X,x)$. Then $\check{\Omega}_{k}(X,x)$ is a topological space as subspace of $\prod_{\lambda\in \Lambda} \Omega^{k}(X_{\lambda},x_{\lambda})$.
We define a natural map $q:\check{\Omega}_{k}(X,x)\rightarrow \check{\pi}_{k}^{top}(X,x)$ by $q((a_{\lambda}))=\langle [(a_{\lambda})]\rangle$, for all $(a_{\lambda})\in \prod_{\lambda\in \Lambda} \Omega^{k}(X_{\lambda},x_{\lambda})$.

 A map $f:X\rightarrow Y$ is called a {\it bi-quotient map} if, whenever $y\in Y$ and $\mathcal{U}$ is a covering of $f^{-1}(y)$ by open subsets of $X$, finitely many $f(U)$, where $U\in \mathcal{U}$, cover some neighbourhood of $y$ in $Y$ \citep[Definition 1.1]{ME}. Indeed, if $Y$ is discrete, then any map $f:X\rightarrow Y$ is a bi-quotient map. Moreover, we know that any product of bi-quotient maps is a bi-quotient map \cite[Theorem 1.2]{ME}.
\begin{theorem}\label{quo}
The map $q:\check{\Omega}_{k}(X,x)\rightarrow \check{\pi}_{k}^{top}(X,x)$ is a quotient map, for all $k\in \mathbb{N}$. In other words, the topology of $\check{\pi}_{k}^{top}(X,x)$ agrees with the quotient topology on $\check{\pi}_{k}(X,x)$ induced by $q$.
\end{theorem}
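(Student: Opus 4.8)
The plan is to reduce the statement to a known fact about products of bi-quotient maps, using the expansion $\mathbf{p}:(X,x)\rightarrow((X_{\lambda},x_{\lambda}),p_{\lambda\lambda'},\Lambda)$. First I would fit the map $q$ into a commutative square with the product map $\prod_{\lambda}q_{\lambda}:\prod_{\lambda}\Omega^{k}(X_{\lambda},x_{\lambda})\rightarrow\prod_{\lambda}\pi_k^{qtop}(X_{\lambda},x_{\lambda})$, where $q_{\lambda}:\Omega^{k}(X_{\lambda},x_{\lambda})\rightarrow\pi_k^{qtop}(X_{\lambda},x_{\lambda})$ is the usual quotient map onto the $k$th quasitopological homotopy group of the polyhedron $X_{\lambda}$. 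The left vertical inclusion is $\check{\Omega}_k(X,x)\hookrightarrow\prod_{\lambda}\Omega^{k}(X_{\lambda},x_{\lambda})$ and the right vertical inclusion is $\check{\pi}_k^{top}(X,x)=\varprojlim\pi_k^{qtop}(X_{\lambda},x_{\lambda})\hookrightarrow\prod_{\lambda}\pi_k^{qtop}(X_{\lambda},x_{\lambda})$, the latter identification coming from Theorem \ref{Bra}. One checks that $(\prod_{\lambda}q_{\lambda})(\check{\Omega}_k(X,x))=\check{\pi}_k^{top}(X,x)$ and that the restriction of $\prod_{\lambda}q_{\lambda}$ to $\check{\Omega}_k(X,x)$ is exactly $q$; this is where I would spell out that a compatible family of loops is sent to a compatible family of homotopy classes and conversely every element of the inverse limit lifts, by surjectivity of each $q_{\lambda}$.

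Next I would invoke the bi-quotient machinery. Each $\pi_k^{qtop}(X_{\lambda},x_{\lambda})$ is discrete (since $X_{\lambda}\in HPol$), so each $q_{\lambda}$ is a bi-quotient map by the observation recorded just before the theorem that any map into a discrete space is bi-quotient. By \cite[Theorem 1.2]{ME} the product $\prod_{\lambda}q_{\lambda}$ is then a bi-quotient map, hence in particular a quotient map. The remaining point is that a bi-quotient map restricts to a bi-quotient (hence quotient) map over any subspace of the target: given $\check{\pi}_k^{top}(X,x)\subseteq\prod_{\lambda}\pi_k^{qtop}(X_{\lambda},x_{\lambda})$ and its full preimage $\check{\Omega}_k(X,x)$, the defining covering property of bi-quotient maps is inherited verbatim when one intersects neighbourhoods with the subspace. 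This yields that $q:\check{\Omega}_k(X,x)\rightarrow\check{\pi}_k^{top}(X,x)$ is a quotient map, and the final sentence of the statement is just a rephrasing of this.

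The main obstacle I anticipate is the identification $(\prod_{\lambda}q_{\lambda})^{-1}(\check{\pi}_k^{top}(X,x))=\check{\Omega}_k(X,x)$: one must verify that the compatibility condition $p_{\lambda\lambda'}(a_{\lambda'})\simeq a_{\lambda}$ defining a $k$-shape loop corresponds precisely, under the $q_{\lambda}$'s, to the compatibility condition $(p_{\lambda\lambda'})_*[a_{\lambda'}]=[a_{\lambda}]$ defining an element of $\varprojlim\pi_k^{qtop}(X_{\lambda},x_{\lambda})$, together with checking that $q$ agrees with the restriction of $\prod_\lambda q_\lambda$ as set maps and that the subspace topologies match on both sides so that the square is a pullback of topological spaces. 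Once this bookkeeping is in place, the bi-quotient argument closes the proof. A secondary point worth stating explicitly is that restriction of a bi-quotient map to the preimage of a subspace is again bi-quotient, which follows immediately from the definition but should be noted since it is the step that converts the global quotient property into the one we want over $\check{\pi}_k^{top}(X,x)$.
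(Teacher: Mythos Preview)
Your proposal is correct and follows essentially the same route as the paper: both arguments observe that each $q_{\lambda}$ is bi-quotient because its target is discrete, invoke \cite[Theorem~1.2]{ME} to make $\prod_{\lambda}q_{\lambda}$ bi-quotient, and then restrict to the saturated subspace $\check{\Omega}_k(X,x)=(\prod_{\lambda}q_{\lambda})^{-1}(\check{\pi}_k^{top}(X,x))$.

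The only divergence is in the last step. The paper first establishes continuity of $q$ by hand, then notes that $\check{\pi}_k^{top}(X,x)$ sits as a \emph{closed} subspace of $\prod_{\lambda}\pi_k^{qtop}(X_{\lambda},x_{\lambda})$ (via \cite[Theorem~2.5.1]{E}) and cites \cite{Ro} for the fact that a quotient map restricted to a saturated closed set is again quotient. Your version bypasses both the separate continuity check and the closedness argument by appealing directly to the (correct and easy) observation that bi-quotient maps restrict to bi-quotient maps over \emph{any} subspace of the target, provided one restricts the domain to the full preimage. This is a slightly cleaner packaging of the same idea and avoids the external reference to Geoghegan; the paper's route has the minor advantage of making the continuity of $q$ visible in terms of the basic open sets $V_{\lambda_0}^F$.
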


\begin{proof}
It is obvious that $q$ is a surjection. For continuity, suppose $(a_{\lambda})\in \check{\Omega}_{k}(X,x)$, $F=\langle [(a_{\lambda})]\rangle$ and $V_{\lambda_{0}}^{F}$ is an open neighbourhood  of $F$ in $\check{\pi}_{k}^{top}(X,x)$. Since $q_{\lambda_{0}}:\Omega^{k}(X_{\lambda_{0}},x_{\lambda_{0}})\rightarrow \pi_{k}^{qtop}(X_{\lambda_{0}},x_{\lambda_{0}})$ is continuous and $\pi_{k}^{qtop}(X_{\lambda_{0}},x_{\lambda_{0}})$ is discrete, $[a_{\lambda_{0}}]=q_{\lambda_{0}}^{-1}(\lbrace [a_{\lambda_{0}}]\rbrace)$ is an open subset of $\Omega^{k}(X_{\lambda_{0}},x_{\lambda_{0}})$. Then $V=\prod_{\lambda\neq \lambda_{0}} \Omega^{k}(X_{\lambda},x_{\lambda})\times [a_{\lambda_{0}}]$ is an open subset of $\prod_{\lambda} \Omega^{k}(X_{\lambda},x_{\lambda})$ and hence $U=V\cap \check{\Omega}_{k}(X,x)$ is an open subset of $\check{\Omega}_{k}(X,x)$. We show that $q(U)\subseteq V_{\lambda_{0}}^{F}$. For each $u=(b_{\lambda})$ in $U$, we have $p_{\lambda\lambda^{\prime}}(b_{\lambda^{\prime}})\simeq b_{\lambda}$ for all $\lambda^{\prime}\geq \lambda$ and $b_{\lambda_{0}}\simeq a_{\lambda_{0}}$. Hence $q(u)=\langle [(b_{\lambda})]\rangle\in V_{\lambda_{0}}^{F}$.

Now we show that $q$ is a quotient map. Since $\pi_{k}^{qtop}(X_{\lambda},x_{\lambda})$ is discrete, then $q_{\lambda}:\Omega^{k}(X_{\lambda},x_{\lambda})\rightarrow \pi_{k}^{qtop}(X_{\lambda},x_{\lambda})$ is a bi-quotient map, for all $\lambda\in \Lambda$. Therefore $\phi=\prod q_{\lambda}:\prod_{\lambda\in \Lambda}\Omega^{k}(X_{\lambda},x_{\lambda})\rightarrow \prod_{\lambda\in \Lambda}\pi_{k}^{qtop}(X_{\lambda},x_{\lambda})$ is a bi-quotient and so a quotient map. On the other hand, $\pi_{k}^{qtop}(X_{\lambda},x_{\lambda})$ is discrete and so Hausdorff, for all $\lambda\in \Lambda$. Then $\check{\pi}_{k}^{top}(X,x)=\displaystyle{\lim_{\leftarrow}\pi_k^{qtop}(X_{\lambda},x_{\lambda})}$ is a closed subset of $\prod_{\lambda\in \Lambda}\pi_{k}^{qtop}(X_{\lambda},x_{\lambda})$ by \cite[Theorem 2.5.1]{E}. Therefore, $q=\prod p_{\lambda}|_{\check{\Omega}_{k}(X,x)}:\check{\Omega}_{k}(X,x)\rightarrow \check{\pi}_{k}^{top}(X,x)$ is a quotient map by \cite{Ro}.

\end{proof}

\section*{References}

\bibliography{mybibfile}

\end{document}